\newtheorem{theorem}{Theorem}
\newtheorem{lemma}[theorem]{Lemma}
\newtheorem{definition}[theorem]{Definition}
\newtheorem{corollary}[theorem]{Corollary}
\newcommand{\RR}{\mathbb{R}}
\newcommand{\ZZ}{\mathbb{Z}}
\DeclareMathOperator{\im}{Im}
\DeclareMathOperator{\Hom}{Hom}
\begin{document}
\title{A local Jordan-Brouwer separation theorem}
\author{Alexander Lemmens}
\maketitle
\begin{abstract}
We intoduce a local version of the Jordan-Brouwer separation theorem and deduce some global statements, some of which may follow from known results, but the technique is new.
\end{abstract}
\tableofcontents
\section{Introduction}
Let $n>1$ be an integer. The following is a classical theorem in topology called the Jordan-Brouwer separation theorem:
\begin{theorem}
Let $S$ be a subset of $\RR^n$ homeomorphic to the $n-1$-dimensional sphere $S^{n-1}$, then $\RR^n\backslash S$ consists of exactly two connected components.
\end{theorem}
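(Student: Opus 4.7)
The plan is to derive the theorem from Alexander duality, which converts the topological question about the complement into a cohomology computation on $S$ itself. First I would compactify by passing to the one-point compactification $S^n = \RR^n \cup \{\infty\}$; since $S$ is compact, the added point lies in the complement, so it suffices to analyze $S^n \setminus S$ and then determine what happens upon removing $\infty$.

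The main computation is Alexander duality in the form
\[
\tilde H_i(S^n \setminus S;\ZZ) \;\cong\; \check H^{\,n-i-1}(S;\ZZ),
\]
valid for any compact subset $S \subset S^n$. Because $S$ is homeomorphic to $S^{n-1}$, in particular an ANR, its Čech cohomology agrees with singular cohomology, giving $\check H^{n-1}(S;\ZZ) \cong \ZZ$. Setting $i=0$ yields $\tilde H_0(S^n\setminus S;\ZZ) \cong \ZZ$. Since $S^n \setminus S$ is open in a manifold it is locally path connected, and the rank of $\tilde H_0$ is one less than the number of components; hence $S^n \setminus S$ splits into exactly two components.

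To return to $\RR^n$, let $U$ be the component containing $\infty$. Since $U$ is open in $S^n$, a Euclidean neighborhood of $\infty$ lies entirely inside $U$; removing a single point from a connected open subset of an $n$-manifold with $n \geq 2$ preserves connectedness. Consequently $U \setminus \{\infty\}$ remains connected while the other component is untouched, and $\RR^n \setminus S$ has exactly two components.

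The hard part will be invoking Alexander duality itself, since that isomorphism carries essentially all the topological content of the theorem; care is needed in choosing the right formulation (singular cohomology under local contractibility, or Čech cohomology with no assumptions on $S$). A more elementary but substantially longer alternative, avoiding duality, would be an induction on $n$: decompose $S$ into two closed hemispheres, apply a Mayer--Vietoris argument to their complements, and reduce to the statement that the complement of a tamely embedded disk in $S^n$ is connected.
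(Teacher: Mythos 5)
Your proof via Alexander duality is correct: compactifying to $S^n$, applying the duality isomorphism $\tilde H_i(S^n\setminus S)\cong\check H^{n-i-1}(S)$, identifying $\check H^{n-1}(S)\cong\ZZ$ since $S$ is an ANR, reading off $\tilde H_0\cong\ZZ$, and then puncturing the component containing $\infty$ (which stays connected for $n\geq 2$ by an excision argument, e.g.\ $H_1(U,U\setminus\{\infty\})=0$) all go through. The only small thing worth making explicit is that a nondegenerate separation of $\RR^n\setminus S$ is exactly $(U\setminus\{\infty\})\sqcup V$ with both pieces open and connected, so nothing collapses or splits when $\infty$ is deleted.

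The route is genuinely different from what this paper does. The paper does not prove this theorem at all: it records it as a classical result and cites Dold, and the actual work of the paper goes into a \emph{local} separation theorem (Theorem \ref{localsep}), proved by a Mayer--Vietoris induction on a nested family of balls, together with a calculus of $a,b$-injective/surjective towers and their inverse limits (Lemmas \ref{iii}, \ref{five}, \ref{n-1}). The global statement is then recovered, in a more general form, through Corollary \ref{global} and Theorem \ref{twocomponents} using an oriented-intersection-parity argument $e(\gamma)\in\ZZ/2\ZZ$. The trade-off is the usual one: Alexander duality is a single high-level tool that gets the global count immediately but says nothing local, whereas the paper's inductive halving argument (splitting $L_{j'}$ into $H^+_{j'+1}$ and $H^-_{j'+1}$) is longer and more elementary, but produces local information — which components a small ball meets — that duality does not readily give and which is exactly what the paper needs for its later corollaries. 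Your suggested ``more elementary alternative'' (Mayer--Vietoris on hemispheres) is essentially the skeleton of the paper's Lemma \ref{n-1}, run globally rather than over a tower of shrinking balls.
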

See \cite[theorem 3.6, p.\ 162]{dold} for a generalization of the Jordan-Brouwer se-paration theorem to manifolds. In this document we will prove a local version of the separation theorem and deduce some global results:
\begin{theorem}\label{localsep}
Let $S$ be a closed subset of $\RR^n$ containing a point $x$ that is locally around $x$ homeomorphic to an open subset of $\RR^{n-1}$, then there exist two open balls $B_2\subset B_1$ centered at $x$ such that for every connected component $C$ of $B_1\backslash S$ one of the following things holds:
\begin{itemize}
\item[$(1)$] $C$ is disjoint with $B_2$
\item[$(2)$] $B_2\cap S\subset\overline{C}$
\end{itemize}
and such that exactly two connected components of $B_1\backslash S$ satisfy $(2)$.
$B_1$ and $B_2$ can be chosen arbitrarily small. More precisely there exists $\epsilon_1>0$ such that for every $0<\delta_1<\epsilon_1$ there exists $\epsilon_2>0$ such that for every $0<\delta_2<\epsilon_2$ the balls $B_1,B_2$ centered at $x$ of radii $\delta_1,\delta_2$ respectively satisfy the above.
\end{theorem}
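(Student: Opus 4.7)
The plan is to reduce the local statement to the generalized Jordan-Brouwer theorem cited above (Dold, Theorem 3.6) by building, from the local chart at $x$, a globally embedded closed topological $(n-1)$-sphere $\Sigma \subset \RR^n$ that coincides with $S$ on a neighborhood of $x$. Applying the global theorem to $\Sigma$ yields two components of $\RR^n \setminus \Sigma$, and the local statement should follow by intersecting with small balls.

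By hypothesis, there is an open neighborhood $V$ of $x$ in $\RR^n$ and a homeomorphism $\phi : V \cap S \to W$ onto an open subset $W \subset \RR^{n-1}$ with $\phi(x) = 0$. After shrinking, I may assume $\bar B_{\RR^{n-1}}(0, 2) \subset W$, and I set $D = \phi^{-1}\bigl(\bar B_{\RR^{n-1}}(0, 1)\bigr)$, a compact topological $(n-1)$-disk in $S$ containing $x$ in its relative interior. Since $D$ is closed in $S$ and $S$ is closed in $\RR^n$, the set $S \setminus \phi^{-1}(B_{\RR^{n-1}}(0, 1))$ is closed in $\RR^n$ and avoids $x$, so its distance $\epsilon_1$ from $x$ is strictly positive. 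For any $0 < \delta_1 < \epsilon_1$ the ball $B_1 = B(x, \delta_1)$ satisfies $B_1 \cap S \subset D$.

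The core step is to construct a closed topological $(n-1)$-sphere $\Sigma \subset \RR^n$ with $D \subset \Sigma$, i.e., to attach to $D$ a second topological $(n-1)$-disk $D'$ with $\partial D' = \partial D$, meeting $D$ only along this common boundary, so that $D \cup D'$ is locally Euclidean at each boundary point. One concrete approach is a radial cone: pick a point $p$ far from $D$ such that each ray emanating from $p$ meets $\partial D$ in at most one point, and take $D' = \{(1-t)y + tp : y \in \partial D,\ t \in [0, 1]\}$. Alternatively, one can pass to the one-point compactification $S^n = \RR^n \cup \{\infty\}$ and cone $\partial D$ to $\infty$ along suitable arcs. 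Once $\Sigma$ is in place, the cited theorem gives two connected components $C_+$ and $C_-$ of $\RR^n \setminus \Sigma$, and since $\Sigma \cap B_1 = S \cap B_1$ we have $B_1 \setminus S = (B_1 \cap C_+) \sqcup (B_1 \cap C_-)$.

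Finally, take $\epsilon_2 > 0$ so small that $\bar B(x, \epsilon_2) \cap S \subset \phi^{-1}\bigl(B_{\RR^{n-1}}(0, \tfrac{1}{2})\bigr)$, and for $0 < \delta_2 < \epsilon_2$ let $B_2 = B(x, \delta_2)$. Every point of $B_2 \cap S$ lies in the closure of both $C_+$ and $C_-$ by local separation along $\Sigma$; using the chart and the connectedness of $B_2 \cap S$ one checks that a single component of $B_1 \cap C_+$ has all of $B_2 \cap S$ in its closure, and analogously for $C_-$, producing the two distinguished components. Any further component of $B_1 \setminus S$ meeting $B_2$ is then forced by the same argument to coincide with one of these, giving exactly two components satisfying $(2)$. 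The main obstacle I anticipate is the capping step: for tame embeddings the radial cone works, but for potentially wild topological embeddings of the $(n-2)$-sphere $\partial D$ in $\RR^n$ one must verify carefully that $D \cup D'$ is genuinely a topological sphere (locally Euclidean at $\partial D$), which may demand a more delicate construction than the naive cone.
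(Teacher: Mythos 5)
Your proposal takes a genuinely different route from the paper: you try to reduce the local statement to the global Jordan--Brouwer theorem by capping the disk $D \subset S$ to a globally embedded $(n-1)$-sphere $\Sigma$, while the paper argues from scratch using reduced homology of complements of embedded \emph{disks} (not spheres), Mayer--Vietoris, and an inverse-limit formalism over a nested family of balls. The paper's key tool is Lemma \ref{zeromap} combined with the structure theory of $a,b$-injective/surjective towers, and the only external input is that a closed disk embedded in $S^n$ has acyclic complement --- a fact valid for arbitrary (including wild) topological embeddings. This design is not incidental: the author explicitly avoids ever needing a complementary disk $D'$, and that is precisely where your argument has its principal gap.

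The capping step is not a technicality you can defer; it is false in the generality required. You need a topological $(n-1)$-disk $D'$ embedded in $\RR^n$ with $\partial D' = \partial D$, $D' \cap D = \partial D$, and with $D'$ avoiding the small ball $B_1$. Your radial cone $D' = \{(1-t)y + tp\}$ requires a center $p$ from which each ray meets $\partial D$ at most once, and even when such a $p$ exists, $D'$ may re-intersect $D$ away from $\partial D$. For wild embeddings (which the theorem's hypothesis, a mere homeomorphism onto an open subset of $\RR^{n-1}$, certainly permits), there is no reason such a $p$ or any alternative cap exists. For example already in $\RR^3$ the boundary $(n-2)$-sphere $\partial D$ can be a wild circle for which no second embedded disk meeting $D$ only along $\partial D$ is available. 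You flag this issue yourself, but it is the heart of the matter, not a loose end: if capping worked in general, the paper's whole machinery would be superfluous.

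There is a second gap even granting $\Sigma$. Having the two global components $C_\pm$ of $\RR^n \setminus \Sigma$ with $B_1 \setminus S = (B_1 \cap C_+) \sqcup (B_1 \cap C_-)$ does not yet give the theorem's conclusion: you must show that exactly one connected component of $B_1 \cap C_+$ (and one of $B_1 \cap C_-$) contains $B_2 \cap S$ in its closure, and that every other component of $B_1 \setminus S$ is disjoint from $B_2$. Each point $q \in B_2 \cap S$ lies in $\overline{C_+}$, hence in the closure of \emph{some} component of $B_1 \cap C_+$, but a priori different $q$'s could select different components of $B_1 \cap C_+$. Your ``using the chart and the connectedness of $B_2 \cap S$ one checks'' is exactly the nontrivial local separation statement the theorem is asserting, and it is precisely what the paper's computation $\lim_{\longleftarrow}\overset{\sim}{H}_0(B_i \setminus S) \cong \ZZ$ in Lemma \ref{n-1} is engineered to deliver. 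As written, the proposal assumes what it needs to prove at this point.
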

The intuitive meaning is that locally around $x$ the subset $S$ divides the space into two regions, corresponding to the connected components that satisfy $(2)$. Any point in $B_2\backslash S$ is contained in one of these connected components but not both (otherwise they would be one and the same connected component). Any two points around $x$ that are on the same side can be connected inside the bigger ball $B_1$ without going through $S$.\\

Of course (1) and (2) can't both be the case at the same time, because if $B_2$
is disjoint with $C$ then it is also disjoint with $\overline{C}$ as $B_2$ is open. But if (2) were
also the case then $B_2\cap S\subset\overline{C}\cap B_2 = \emptyset$, which is impossible since $x\in B_2 \cap S$.\\

From this local statement one can prove the following global result, which may already known.
\begin{corollary}\label{global}
Let $X$ be an $n$-dimensional topological manifold and $Y$ an $n-1$-dimensional submanifold, then for any connected component $C$ of $X\backslash Y$ we have that $\overline{C}\cap Y$ is a union of connected components of $Y$. If $X$ and $Y$ are both connected then $X\backslash Y$ has at most two connected components.
\end{corollary}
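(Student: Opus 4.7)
The plan is to reduce both parts of the corollary to Theorem~\ref{localsep}, applied at points of $Y$ after transplanting to $\RR^n$ via a chart. A key preliminary: every point of the topological $n$-manifold $X$ has a neighbourhood $U$ with a homeomorphism $\phi : U \to \RR^n$ (shrink to a coordinate ball), and since $Y$ is closed in $X$ the image $\phi(Y \cap U)$ is a closed subset of $\RR^n$ that is locally homeomorphic, near $\phi(x)$, to an open subset of $\RR^{n-1}$. This is precisely the hypothesis of Theorem~\ref{localsep}.

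For the first assertion, I would show that $\overline{C} \cap Y$ is clopen in $Y$; then it is automatically a union of connected components. Closedness is immediate since $\overline{C}$ is closed in $X$. For openness, fix $x \in \overline{C} \cap Y$, apply Theorem~\ref{localsep} in a chart as above, and pull back the balls to open sets $V_2 \subset V_1$ in $X$ centred at $x$. Since $x \in \overline{C}$, pick $p \in C \cap V_2$: the connected component $C'$ of $V_1 \setminus Y$ containing $p$ meets $V_2$, so it satisfies $(2)$ rather than $(1)$, giving $V_2 \cap Y \subset \overline{C'}$. But $C'$ is a connected subset of $X \setminus Y$ meeting $C$, hence $C' \subset C$ and $\overline{C'} \subset \overline{C}$, so $V_2 \cap Y \subset \overline{C} \cap Y$, proving openness.

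For the second assertion, assume $X$ and $Y$ are connected (the case $Y = \emptyset$ is trivial). By the first part, $\overline{C} \cap Y$ is empty or all of $Y$ for every component $C$. The empty case is impossible: it would make $\overline{C}$ disjoint from $Y$ and thus contained in $X \setminus Y$, forcing $C = \overline{C}$, so $C$ would be clopen in the connected $X$, hence equal to $X$, contradicting $Y \neq \emptyset$. So $Y \subset \overline{C}$ for every component. Now fix any $y \in Y$ and apply Theorem~\ref{localsep} at $y$ to get open sets $V_2 \subset V_1$ with exactly two components of $V_1 \setminus Y$ satisfying $(2)$. For each component $C$ of $X \setminus Y$, pick $p \in C \cap V_2$ (possible since $y \in \overline{C}$); by the argument above, the component of $V_1 \setminus Y$ through $p$ satisfies $(2)$ and lies in $C$. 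Disjointness of distinct $C$ turns this assignment into an injection into a two-element set, giving the bound.

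The main obstacle I anticipate is purely technical: making the reduction to $\RR^n$ inside the chart clean, so that Theorem~\ref{localsep} applies verbatim rather than in some rephrased form; the trick is to choose the chart so that $\phi(U) = \RR^n$, ensuring $\phi(Y \cap U)$ is closed in $\RR^n$.
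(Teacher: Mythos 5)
Your proof follows the same route as the paper: show $\overline{C}\cap Y$ is clopen in $Y$ by applying Theorem~\ref{localsep} in a chart around each $x\in\overline{C}\cap Y$, and then, under the connectedness hypotheses, compare components of $X\backslash Y$ with the two distinguished components of $B_1\backslash Y$. The only real difference is cosmetic: the paper sets up a \emph{surjection} from the two-element set of distinguished components onto the components of $X\backslash Y$, whereas you set up an \emph{injection} the other way; and you spell out why $\overline{C}\cap Y=\emptyset$ is impossible, a step the paper leaves as ``it follows that any connected component of $X\backslash Y$ has $Y$ in its closure.'' One small caution: the corollary as stated does not assume $Y$ is closed in $X$, only that it is a submanifold (hence locally closed), so rather than asserting $\phi(Y\cap U)$ is closed in $\RR^n$ because ``$Y$ is closed in $X$,'' you should shrink the chart so that $Y$ becomes closed inside it; with that fix your argument matches the paper's.
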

\begin{proof}
We have to prove that $\overline{C}\cap Y$ is both open and closed for the induced topology on $Y$. Since we take the intersection of a closed set with $Y$ the result is clearly closed for the induced topology. To prove that it is open let $x$ be any point of $\overline{C}\cap Y$, we will show that $x$ is an interior point of $\overline{C}\cap Y$ for the induced topology. To this end we take a local coordinate system around $x$ and apply theorem \ref{localsep} to obtain open balls $B_1,B_2$ centred at $x$ such that any connected component of $B_1\backslash Y$ is either disjoint with $B_2$ or has $B_2\cap Y$ in its closure. Since $x$ is in the closure of $C$ we know that $C$ contains some point of $B_2$ and therefore $C\cap B_1$ contains some connected component of $B_1\backslash Y$ that is not disjoint with $B_2$. Since this connected component has $B_2\cap Y$ in its closure we have $B_2\cap Y\subset\overline{C}$, proving that $x$ is an interior point of $\overline{C}\cap Y$ for the induced topology on $Y$.\newline Now assume $X$ and $Y$ are both connected. Then it follows that any connected component of $X\backslash Y$ has $Y$ in its closure. Let $x$ be a point of $Y$ and let $B_1$ and $B_2$ be as in theorem \ref{localsep} (with respect to some local coordinate system around $x$). We have an obvious map from the set of connected components of $B_1\backslash Y$ not disjoint with $B_2$ to the set of connected components of $X\backslash Y$ (as $B_1\backslash Y\subset X\backslash Y$). The domain has only two elements by our choice of $B_1,B_2$. The map is surjective because any connected component $C$ of $X\backslash Y$ contains some point of $B_2$ (as $x\in\overline{C}$) which must be contained in some connected component of $B_1\backslash Y$ not disjoint with $B_2$. It follows that $X\backslash Y$ has at most two connected components, as the set of these connected components is the image of a map whose domain has only two elements.
\end{proof}
We have the following extension of the last corollary:
\begin{theorem}\label{twocomponents}
In the context of corollary \ref{global} if $X$ and $Y$ are both connected (and non-empty) and $Y$ is a closed subset and $\Hom(H_1(X),\ZZ/2\ZZ)=0$, then $X\backslash Y$ has exactly two connected components.
\end{theorem}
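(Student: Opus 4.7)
The plan is to combine Corollary \ref{global}, which gives at most two components, with an argument ruling out the one-component case. Note first that $X\setminus Y$ is non-empty: if $y\in Y$ had a neighborhood in $X$ contained in $Y$, then an open subset of $\RR^n$ would embed in $\RR^{n-1}$, violating invariance of domain. So the task reduces to excluding the case that $X\setminus Y$ is connected, and my plan is to derive from that assumption a non-trivial connected double cover $\pi:\tilde X\to X$. Since $X$ is a connected, locally path-connected, semilocally simply connected manifold, connected double covers of $X$ correspond to index-two subgroups of $\pi_1(X)$, hence to non-zero homomorphisms $\pi_1(X)\to\ZZ/2\ZZ$; these factor through $H_1(X)$ because $\ZZ/2\ZZ$ is abelian, so a non-trivial connected double cover would produce a non-zero element of $\Hom(H_1(X),\ZZ/2\ZZ)$, contradicting the hypothesis.

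The double cover is built from Theorem \ref{localsep}. For each $y\in Y$ I would fix balls $B_2(y)\subset B_1(y)$ as in the local theorem and denote by $C_A(y),C_B(y)$ the two components of $B_1(y)\setminus Y$ satisfying condition $(2)$. A preliminary check shows that the germ at $y$ of the unordered pair $\{C_A(y),C_B(y)\}$ is independent of the chosen radii: any component of a smaller $B_1'\setminus Y$ that meets $B_2'$ must also satisfy $(2)$ with respect to $B_2$ (as the other components of $B_1'\setminus Y$ are disjoint with $B_2'$, hence with $B_2$), so it lies in one of $C_A(y),C_B(y)$; and the induced map from new to old local sides is a bijection because $y$ lies in the closure of each of $C_A(y)$ and $C_B(y)$. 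I then define a sheaf $\mathcal{S}$ on $X$ whose sections over an open $U$ are the locally constant functions $f:U\setminus Y\to\ZZ/2\ZZ$ satisfying the following condition at every $y\in U\cap Y$: for small enough balls inside $U$ there exists $a\in\ZZ/2\ZZ$ such that $f$ takes the value $a$ on each component of $B_2(y)\setminus Y$ contained in $C_A(y)$ and the value $a+1$ on each such component contained in $C_B(y)$. The local theorem, together with the preliminary independence claim, makes $\mathcal{S}$ a locally constant sheaf of sets with stalks of cardinality two, and the étale space of $\mathcal{S}$ is the double cover $\tilde X$.

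To conclude I need to check that, assuming $X\setminus Y$ is connected, $\tilde X$ is connected, equivalently that $\mathcal{S}$ has no global section. Any such section would restrict on the connected set $X\setminus Y$ to a constant $c\in\ZZ/2\ZZ$; but the defining condition at any $y\in Y$ would then force $c=a$ and $c=a+1$ simultaneously, which is impossible. This yields the promised non-trivial double cover, and hence the desired contradiction. The main technical obstacle I anticipate is the local-to-global setup of $\mathcal{S}$: namely verifying that the condition defining $\mathcal{S}$ is independent of the ball choices, that $\mathcal{S}$ satisfies the sheaf axioms, and that it is locally constant with two-element stalks near points of $Y$. Theorem \ref{localsep} is precisely the input needed for these local checks, and once they are in place the global contradiction follows formally.
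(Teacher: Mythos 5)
Your proposal is correct, and it is the same underlying idea as the paper — an orientation class in $\Hom(H_1(X),\ZZ/2\ZZ)$ that is forced to be non-zero when $X\backslash Y$ is connected — but it reaches that class by a genuinely different technical route. The paper builds the class directly as a crossing-parity invariant $e(\gamma)\in\ZZ/2\ZZ$ attached to paths: it first proves a local agreement lemma (Lemma \ref{agree}) saying that any two local-ball pairs from Theorem \ref{localsep} decide consistently whether nearby points lie on the same side of $Y$, then constructs $e(\gamma)$ by subdividing $\gamma$, checks independence of all the choices, and proves additivity and (via a square-subdivision argument on $[0,1]^2$) homotopy invariance so that $e$ descends to a homomorphism $\pi_1(X)\to\ZZ/2\ZZ$. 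You instead package the same local data as a locally constant $\ZZ/2\ZZ$-sheaf $\mathcal S$, take its \'etale space as a double cover, and invoke covering-space theory to obtain the homomorphism; your $e(\gamma)$ is precisely the monodromy of that cover. The trade-off is clean: your route avoids the explicit independence-of-subdivision and homotopy-invariance bookkeeping (the paper's Lemma \ref{ii}), since those facts come for free once the cover exists, at the cost of invoking the classification of covering spaces. One caution: your preliminary check only treats nested concentric balls, but verifying that $\mathcal S$ is locally constant with two-element stalks along $Y$ requires comparing ball pairs centered at \emph{different} nearby points of $Y$ in possibly different coordinate systems — this is exactly the content of the paper's Lemma \ref{agree}, which is a nontrivial intermediate result that your approach cannot skip. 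You flag this as the anticipated technical obstacle, and indeed it is the real work; once that lemma is in place your argument closes correctly. Your separate observation that $X\backslash Y\neq\emptyset$ is a nice explicit point the paper handles implicitly (it simply picks two points of $B_2\backslash Y$ on opposite sides, which exist because two components satisfy condition $(2)$).
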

We will prove this in section \ref{proof}. (Note that if $H_1(X)$ is finite of odd order then the requirement $\Hom(H_1(X),\ZZ/2\ZZ)=0$ is always satisfied.) From this global result we can deduce a stronger version of our local separation theorem
\begin{theorem}
Let $B_1$ be an open ball in $\RR^n$ and $S$ a subset of $B_1$ that is closed for the induced topology on $B_1$, and homeomorphic to an open subset of $\RR^{n-1}$. Let $B_2\subset B_1$ be an open ball that has nonempty intersection with exactly one connected component of $S$, then for any connected component $C$ of $B_1\backslash S$ one of the following things holds:
\begin{itemize}
\item[$(1)$] $C$ is disjoint with $B_2$
\item[$(2)$] $B_2\cap S\subset\overline{C}$
\end{itemize}
and exactly two connected components of $B_1\backslash S$ satisfy $(2)$.
\end{theorem}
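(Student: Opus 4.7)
The plan is to reduce to theorem \ref{twocomponents} applied to the pair $(B_1, S_0)$, where $S_0$ denotes the unique connected component of $S$ that meets $B_2$; the key identity is $B_2\cap S = B_2\cap S_0$. Since $S$ is locally connected (being an $(n-1)$-manifold), its components are open in $S$, so $S_0$ and $S\setminus S_0$ are both closed in $B_1$. The ball $B_1$ is connected and contractible, so $\Hom(H_1(B_1),\ZZ/2\ZZ) = 0$, and theorem \ref{twocomponents} yields exactly two connected components $V_1, V_2$ of $B_1\setminus S_0$, each satisfying $S_0\subset \overline{V_i}$ by the argument in the proof of corollary \ref{global}.

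For the (1)/(2) dichotomy: any component $C$ of $B_1\setminus S$ is connected and disjoint from $S_0$, so it lies in some $V_i$. Corollary \ref{global} applied to $(B_1,S)$ gives that $\overline{C}\cap S$ is a union of connected components of $S$, so either $S_0\subset \overline{C}$ (whence $B_2\cap S = B_2 \cap S_0 \subset \overline{C}$, giving (2)) or $S_0\cap \overline{C} = \emptyset$. In the second case (1) must hold: otherwise some $y\in C\cap B_2$ would lie in a connected component $D$ of $B_2\setminus S_0 = B_2\setminus S$, with $D\subset C$ and $D\subsetneq B_2$ (since $S_0\cap B_2\neq\emptyset$); then $D$ would have nonempty boundary in the connected open ball $B_2$, contained in $S_0\cap \overline{D}\subset S_0\cap \overline{C} = \emptyset$, a contradiction. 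Note (1) and (2) are mutually exclusive since $B_2\cap S\neq\emptyset$.

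The subtler step is exactness. Pick $p\in S_0\cap B_2$ and apply theorem \ref{localsep} to obtain nested balls $B'_2\subset B'_1\subset B_2$ centered at $p$ such that exactly two components $D_1, D_2$ of $B'_1\setminus S = B'_1\setminus S_0$ satisfy the local type-(2) condition. I claim that after relabelling $D_1\subset V_1$ and $D_2\subset V_2$: each $V_i\cap B'_1$ is open and closed in $B'_1\setminus S_0$ (as the complement of $V_j\cap B'_1$ for $j\neq i$), hence is a union of components of $B'_1\setminus S_0$; it meets $B'_2$ because $p\in \overline{V_i}$, so it must contain at least one type-(2) component, and since only two such components exist in total, one lies in each $V_i$. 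Each $D_i$ is then contained in a component $C_i$ of $B_1\setminus S$ (necessarily inside $V_i$), and $C_i$ satisfies (2) because $p\in \overline{D_i}\subset \overline{C_i}$ forces $S_0\subset \overline{C_i}$ by the dichotomy above. Uniqueness of a (2)-component within each $V_i$ follows because any other candidate would, by the same local argument at $p$, also be forced to contain $D_i$.

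I expect the main obstacle to be precisely this last local-to-global matching: identifying the two local sides of $S_0$ at $p$ with the two global components $V_1, V_2$. Everything upstream is bookkeeping with closures and components; the key insight is to use the \emph{global} two-sidedness supplied by theorem \ref{twocomponents} to pin down the local type-(2) components, rather than trying to globalize the local picture directly.
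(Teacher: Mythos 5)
Your proof is correct, and it takes a genuinely different route from the paper for the ``exactly two'' count. Both you and the paper begin identically: isolate $S_0$ (the paper calls it $Y$), apply theorem~\ref{twocomponents} to $(B_1,S_0)$ to obtain the two global sides $V_1,V_2$, and observe $B_2\cap S = B_2\cap S_0$. But from there the paper takes a homological shortcut: it applies the Mayer--Vietoris sequence for the decomposition $B_1 = (B_1\setminus S_0)\cup(B_1\setminus Y')$ with $Y'=S\setminus S_0$, and uses $H_1(B_1)=0$ together with the connectedness of $B_2\subset B_1\setminus Y'$ to conclude directly that $V_i\cap B_2$ lies in a single component of $B_1\setminus S$; the dichotomy then drops out as a byproduct. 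You instead re-invoke the local theorem~\ref{localsep} at a single point $p\in S_0\cap B_2$ and match the two local type-(2) components $D_1,D_2$ to the global sides $V_1,V_2$ by a clopen/counting argument, then propagate to full components of $B_1\setminus S$; your (1)/(2) dichotomy is proved separately by a clean ``a proper nonempty open piece of a connected ball has nonempty boundary'' argument via corollary~\ref{global}. The paper's route is shorter and more conceptual once one is willing to write down the Mayer--Vietoris sequence; yours is more elementary (pure point-set topology on top of the already-proved local theorem) and has the pedagogical merit of making explicit the local-to-global matching of sides that the paper's homological argument handles implicitly. Both are valid; they genuinely differ in the middle step.
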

\begin{proof}
Let $Y$ be the connected component of $S$ that is not disjoint with $B_2$. By the previous theorem $B_1\backslash Y$ has exactly two connected components, call them $C_1$ and $C_2$. Let
$$Y'=S\backslash Y=S\cap(C_1\cup C_2),$$
then the connected components of $B_1\backslash S$ are exactly the connected components of $C_1\backslash Y'$ and $C_2\backslash Y'$. Let us show for $i=1,2$ that $C_i\cap B_2$ is contained in a single connected component of $B_1\backslash S$. Note that $C_i\cap B_2\subset C_i\backslash Y'$ because $B_2$ is disjoint with $Y'$. So let $x$ and $y$ be points of $C_i\cap B_2$, consider the Mayer Vietoris sequence
$$\ldots\longrightarrow H_1(B_1)\longrightarrow H_0(B_1\backslash S)\longrightarrow H_0(B_1\backslash Y)\oplus H_0(B_1\backslash Y')\longrightarrow\ldots.$$
Of course $H_1(B_1)=0$. Now $x-y$ is defines an element of $H_0(B_1\backslash S)$, and this element becomes zero in $H_0(B_1\backslash Y)$ (because $x$ and $y$ belong to the same connected component of $B_1\backslash Y$) and in $H_0(B_1\backslash Y')$ (because $x$ and $y$ both belong to $B_2$ which is a connected subset of $B_1\backslash Y'$). Therefore, by the exactness of the Mayer-Vietoris sequence $x-y$ is zero in $H_0(B_1\backslash S)$. In other words they are contained in a single connected component of $B_1\backslash S$.\\

This means there are exactly two connected components of $B_1\backslash S$ that are not disjoint with $B_2$. One of them contains $C_1\cap B_2$, the other contains $C_2\cap B_2$. By corollary \ref{global} each $C_i$ has $Y$ in its closure, so $B_2\cap C_i$ has $B_2\cap Y=B_2\cap S$ in its closure, proving the theorem.
\end{proof}
We close this section with another corollary of the local separation theorem.
\begin{corollary}
In the situation of theorem \ref{twocomponents} if $y\in Y$ then $(X\backslash Y)\cup\{y\}$ is pathwise connected.
\end{corollary}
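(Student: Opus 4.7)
The plan is to show that every point of $X\setminus Y$ can be joined to $y$ by a path in $(X\setminus Y)\cup\{y\}$. Combined with the fact that each of the two components $C_1,C_2$ of $X\setminus Y$ is path-connected (an open connected subset of a manifold, hence locally path-connected and connected), this gives the result; it therefore suffices, for $i=1,2$, to exhibit a path from $y$ to some point of $C_i$ inside $C_i\cup\{y\}$.

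Working in a coordinate chart around $y$, I use the quantitative clause of Theorem \ref{localsep} to select a strictly decreasing sequence $r_1>r_2>\cdots\to 0$ of radii such that for every $k\geq 1$ the pair of balls $B_k:=B(y,r_k),B_{k+1}$ satisfies the local separation conclusion. For each $k$, let $D_k^{(1)},D_k^{(2)}$ denote the two connected components of $B_k\setminus Y$ having $B_{k+1}\cap Y$ in their closure; equivalently, since $y\in B_{k+1}\cap Y$ and any component with $y$ in its closure must meet the neighborhood $B_{k+1}$ of $y$, these are exactly the two components of $B_k\setminus Y$ with $y$ in their closure.

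The next step is to label these consistently so that $D_{k+1}^{(i)}\subset D_k^{(i)}$ for both $i$. Each $D_{k+1}^{(i)}$ is a connected subset of $B_k\setminus Y$ whose closure contains $y$, so it lies in one of $D_k^{(1)},D_k^{(2)}$; I claim the two $D_{k+1}^{(i)}$ cannot both land in the same $D_k^{(j)}$. Indeed, if this happened, then the other component $D_k^{(j')}$ would contain no point of $D_{k+1}^{(1)}\cup D_{k+1}^{(2)}$, so every connected component of $B_{k+1}\setminus Y$ lying inside $D_k^{(j')}$ would, by applying the local separation theorem to the pair $(B_{k+1},B_{k+2})$, be disjoint from $B_{k+2}$; hence $D_k^{(j')}\cap B_{k+2}=\emptyset$, contradicting $y\in\overline{D_k^{(j')}}$. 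Relabeling then realizes the desired nesting. The same pigeonhole argument, using the fact from Corollary \ref{global} that $y\in\overline{C_1}\cap\overline{C_2}$, shows that $D_1^{(1)}$ and $D_1^{(2)}$ lie in different global components $C_1,C_2$.

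Finally, fix $i$ and pick $q_k\in D_k^{(i)}$ for each $k\geq 1$. Since $D_k^{(i)}$ is open, connected, and locally Euclidean, it is path-connected; choose a path $\gamma_k\subset D_k^{(i)}$ from $q_k$ to $q_{k+1}$. Reparametrizing $\gamma_k$ on $[1-2^{-k+1},1-2^{-k}]$ and concatenating, then setting the value at $t=1$ equal to $y$, defines a map $\gamma:[0,1]\to C_i\cup\{y\}$. The only substantive issue, and the main technical obstacle, is continuity at $t=1$: because $\gamma_k\subset D_k^{(i)}\subset B_k$, the tail $\gamma([1-2^{-k+1},1])$ lies inside $B_k$, whose coordinate-radius $r_k$ tends to $0$, which yields the required continuity. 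Prepending an arbitrary path inside $C_i$ from a given point to $q_1$ completes the construction.
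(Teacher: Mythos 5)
Your proof is correct and follows essentially the same route as the paper: build a shrinking chain of coordinate balls around $y$ for which consecutive pairs satisfy Theorem~\ref{localsep}, select points converging to $y$ that stay on a consistent side of $Y$, join consecutive points by paths inside the shrinking balls minus $Y$, and concatenate, using that the balls form a neighbourhood basis of $y$ to get continuity at the endpoint. The paper tracks the two sides implicitly via sequences of points $x_i,z_i$ (choosing $x_{i+1}$ in the component $U_i$ containing $x_i$, whose closure contains $y$) rather than establishing the explicit nesting $D_{k+1}^{(i)}\subset D_k^{(i)}$ that you obtain by a pigeonhole argument, but this is the same construction in a slightly different packaging.
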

\begin{proof}
We take balls $B_{1,2}\subset B_{1,1}\subset X$ centred at $y$ (in some local coordinate system) such that every connected component of $B_{1,1}\backslash Y$ satisfies $(1)$ or $(2)$ and two of them satisfy $(2)$.
Let $x_0$ and $z_0$ be points of $B_{1,1}\backslash Y$ belonging to different connected components of $X\backslash Y$. We have to prove that they can be connected by a path that intersects $Y$ only at $y$.\\

We now recursively define a sequence of pairs of balls $B_{i,2}\subset B_{i,1}$, $i\geq 1$, centered at $y$, each pair satisfying the conclusion of theorem \ref{localsep}, by repeatedly applying this theorem. We choose the balls so that $B_{i+1,1}\subset B_{i,2}$. Because the balls can be chosen arbitrarily small we can make sure that the intersection of all of them is $\{y\}$.\\

Next we inductively construct two sequences of points $x_i$, $z_i$, $i\geq 0$ with $x_i,z_i\in B_{i+1,2}\backslash Y$ such that for each $i\geq 0$ we have that $x_i$ and $x_{i+1}$ (resp.\ $z_i$ and $z_{i+1}$) are in the same connected component of $B_{i+1,1}\backslash Y$. Suppose some $x_i$ has already been constructed. To construct $x_{i+1}$, let $U_i$ be the connected component of $B_{i+1,1}\backslash Y$ containing $x_i$. Then $U_i\cap B_{i+1,2}\neq\emptyset$ and so $Y\cap B_{i+1,2}\subset\overline{U_i}$. Since $y$ is a closure point of $U_i$, we know that $U_i$ cannot be disjoint with $B_{i+2,2}$. Therefore we can take a point $x_{i+1}$ in $U_i\cap B_{i+2,2}$. We have a similar argument for the $z_i$.\\

By construction both sequences of points converge to $y$. Now for each $i\geq 1$ we can take a path $\gamma_{i,1}$ from $x_{i-1}$ to $x_i$ and a path $\gamma_{i,2}$ from $z_{i-1}$ to $z_i$, where the image of both paths is contained in $B_{i,1}\backslash Y$.
Finally one joins all the $\gamma_{i,1}$ into a path from $x_0$ to $y$ and the $\gamma_{i,2}$ into a path from $z_0$ to $y$, which together gives a path from $x_0$ to $z_0$ intersecting $Y$ only at $y$.
\end{proof}
\section{Proof of local separation theorem}
In this section we prove the local separation theorem from scratch. All we use in this proof is reduced homology, the concept of inverse limits of Abelian groups and the fact that the complement of a closed ball embedded in a sphere has trivial reduced homology.
\begin{lemma}\label{zeromap}
Let $S$ be a subset of $\RR^n$ and $f$ a homeomorphism from a closed ball $D$ in $\RR^{n-1}$ onto $S$. Let $B_1$ be an open ball in $\RR^n$ with $B_1\cap S\subset f\Big(\overset{\circ}{D}\Big)$, let $D'$ be a closed ball contained in $f^{-1}(B_1)$ and let $B_2\subset B_1$ be an open ball with $B_2\cap S\subset f(D')$. Let $L$ be a linear subspace in $\RR^{n-1}$ of dimension at least one containing some interior point $p$ of $D'$. Let $H$ be some half space of $L$ whose boundary in $L$ contains $p$. Then for each integer $i$ the morphisms
$$\overset{\sim}{H}_i(B_2\backslash f(H\cap D))\longrightarrow\overset{\sim}{H}_i(B_1\backslash f(H\cap D))$$
of reduced homology are zero.
\end{lemma}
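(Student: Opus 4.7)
My first step is to observe that $H \cap D$ is a compact convex subset with nonempty interior inside the affine subspace $L$ through $p$ (since $p \in L$ is interior to $D' \subseteq D$), and is therefore homeomorphic to a closed topological $k$-ball with $k = \dim L$. Applying the homeomorphism $f$, the set $K := f(H \cap D)$ is then an embedded closed topological ball in $\RR^n$; embedding $\RR^n \subset S^n$ via one-point compactification, the promised fact about complements of closed balls in spheres yields $\tilde H_i(S^n \setminus K) = 0$ for every $i$.

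The plan is then a Mayer-Vietoris argument. For any closed ball $\overline{C}$ with $\overline{B_2} \subseteq \overline{C} \subsetneq B_1$, the open cover $\{B_1 \setminus K,\ (S^n \setminus \overline{C}) \setminus K\}$ of $S^n \setminus K$ has intersection $(B_1 \setminus \overline{C}) \setminus K$, and the vanishing of $\tilde H_*(S^n \setminus K)$ collapses the sequence into an isomorphism
$$\tilde H_i\bigl((B_1 \setminus \overline{C}) \setminus K\bigr) \;\xrightarrow{\cong}\; \tilde H_i(B_1 \setminus K) \oplus \tilde H_i\bigl((S^n \setminus \overline{C}) \setminus K\bigr)$$
induced by the two inclusions. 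A cycle $\sigma \in B_2 \setminus K$ then admits the following description of its class $[\sigma] \in \tilde H_i(B_1 \setminus K)$: applying the inverse isomorphism to $([\sigma], 0)$ produces a cycle $\bar\sigma$ supported in the annular shell $(B_1 \setminus \overline{C}) \setminus K$ (disjoint from $B_2$) which is homologous to $\sigma$ in $B_1 \setminus K$ and which additionally bounds a chain in the exterior $(S^n \setminus \overline{C}) \setminus K$.

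To conclude the vanishing of $[\sigma]$ one needs $\bar\sigma$ to bound inside $B_1 \setminus K$ rather than merely in the enlarged exterior. My plan is to let $\overline{C}$ range over a cofinal family of closed balls exhausting $B_1$, assemble the shell-representatives $\bar\sigma_{\overline{C}}$ into a compatible inverse system of abelian groups, and pass to the inverse limit; this is where the inverse-limit tool advertised in the section's preamble enters. As $\overline{C}$ approaches $\partial B_1$ the shells shrink off the edge of $B_1$, while the buffer $f(D') \setminus B_2$ sitting inside $B_1 \setminus K$ along the embedded ball $S$ should provide the geometric slack to arrange compatibility across the system and force the limiting class to vanish. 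The main technical obstacle I anticipate is verifying a Mittag-Leffler condition for the inverse system so that no $\lim^1$ obstruction survives; should that direct limit argument prove too brittle, an alternative is to use the buffer region above to construct an explicit null-homology of $\sigma$ in $B_1 \setminus K$ by hand.
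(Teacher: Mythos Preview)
Your Mayer--Vietoris setup is sound: the cover $\{B_1\setminus K,\ (S^n\setminus\overline{C})\setminus K\}$ of $S^n\setminus K$ does give the isomorphism you wrote, and the preimage $\bar\sigma$ of $([\sigma],0)$ really is a shell-supported cycle homologous to $\sigma$ in $B_1\setminus K$ and bounding in the exterior. The gap is in the last step. The compatible family $([\bar\sigma_{\overline C}])$ you build is a \emph{bona fide} element of the inverse system, and nothing forces it to vanish: the shells $(B_1\setminus\overline C)\setminus K$ shrink to the empty set as $\overline C$ exhausts $B_1$, but inverse limits of homology do not commute with such intersections, and no Mittag--Leffler condition will make a nonzero constant first coordinate $[\sigma]$ disappear. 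In effect your argument never touches the hypotheses $D'\subset f^{-1}(B_1)$ and $B_2\cap S\subset f(D')$; without them the statement is false (take $B_2$ only slightly smaller than $B_1$ and $K$ a wild arc), so any proof that ignores them cannot close. The inverse-limit machinery in this section is meant for the later induction over slices, not for this lemma.

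What the paper does instead is exactly the ``explicit null-homology using the buffer'' that you mention as a fallback, and it is worth seeing how the hypotheses enter. One splits $K=f(H\cap D)$ into an inner piece $f(H\cap D')$ and an outer collar $f\bigl((H\cap D)\setminus\overset{\circ}{D'}\bigr)$; the hypothesis $B_2\cap S\subset f(D')$ says precisely that the outer collar misses $B_2$, so $B_2\subset E:=B_1\setminus f\bigl((H\cap D)\setminus\overset{\circ}{D'}\bigr)$. The remaining work is to show that the inclusion $B_1\setminus K\hookrightarrow E$ is a homology isomorphism. For that one attaches to $H\cap D'$ a straight segment $X$ running out to $f^{-1}(\partial B_1)$ (this is where $D'\subset f^{-1}(B_1)$ is used), so that $X\cup(H\cap D')$ and $X\cup(H\cap\partial D')$ are again embedded disks; two applications of the ``sphere minus embedded disk is acyclic'' fact and of Mayer--Vietoris then show that $C=B_1\setminus f(X\cup(H\cap D'))$ and $C'=B_1\setminus f(X\cup(H\cap\partial D'))$ both have trivial reduced homology. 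Since $C'=C\cup E$ and $C\cap E=B_1\setminus K$, one more Mayer--Vietoris gives the desired isomorphism $\tilde H_i(B_1\setminus K)\cong\tilde H_i(E)$, and the composite $B_2\setminus K\hookrightarrow B_1\setminus K\hookrightarrow E$ factors through the contractible $B_2$.
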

\begin{proof}
Let $X$ be a closed line segment from $H\cap D'$ to $f^{-1}(\partial B_1)$ having only one point in common with $f^{-1}(\partial B_1)$ and $D'$ respectively. Let
$$C=B_1\backslash f(X\cup(H\cap D')),$$
we claim that all of the reduced homology of $C$ is zero. To prove this we will use the result that for a sphere minus an embedded disk all of the reduced homology is zero. We also use the result that if $A$ and $B$ are open sets in a topological space such that $A,B$ and $A\cup B$ all have the property that all reduced homology is zero then $A\cap B$ also has this property. This follows immediately from the Mayer Vietoris sequence.\newline
If we take the one point compactification $\RR^n\cup\{\infty\}$ of $\RR^n$ we get something homeomorphic to a sphere. We know that all of the homology of $\RR^n\cup\{\infty\}\backslash f(X)$ is zero, and we know the same for
$\RR^n\cup\{\infty\}\backslash f(H\cap D')$ because $H\cap D'$ is homeomorphic to a disk. We also know it for the union of these two sets, namely $\RR^n\cup\{\infty\}\backslash f(X\cap H\cap D')$ because $X\cap H\cap D'$ is just one point, therefore we know it for the intersection $\RR^n\cup\{\infty\}\backslash f(X\cup (H\cap D'))$. Since it is also true for $B_1$ and for the union
$$B_1\cup\big(\RR^n\cup\{\infty\}\backslash f(X\cup (H\cap D'))\big)$$
which is $\RR^n\cup\{\infty\}$ minus one point, it is also true for the intersection $C$, proving the claim. Let
$$C'=B_1\backslash f(X\cup(H\cap\partial D'))$$
then by a similar argument the reduced homology of $C'$ is also zero. Let
$$E=B_1\backslash f\Big(D\cap H\backslash\overset{\circ}{D'}\Big),$$
then $C'=C\cup E$ and $C\cap E=B_1\backslash f(D\cap H)$. The associated Mayer Vietoris sequence is
$$\ldots\longrightarrow\overset{\sim}{H}_{i+1}(C')\longrightarrow \overset{\sim}{H}_i(C\cap E)\longrightarrow \overset{\sim}{H}_i(C)\oplus \overset{\sim}{H}_i(E)\longrightarrow\ldots,$$
but we proved that all homology of $C$ and $C'$ are zero, so we get an isomorphism from $\overset{\sim}{H}_i(B_1\backslash f(D\cap H))$ to $\overset{\sim}{H}_i(E)$. Now the morphism from $\overset{\sim}{H}_i(B_2\backslash f(H\cap D))$ to $\overset{\sim}{H}_i(E)$ is zero because it factors through $\overset{\sim}{H}_i(B_2)=0$ (since $B_2\subset E$). It follows that the morphism from $\overset{\sim}{H}_i(B_2\backslash f(H\cap D))$ to $\overset{\sim}{H}_i(B_1\backslash f(H\cap D))$ is zero, as its composition with the isomorphism from $\overset{\sim}{H}_i(B_1\backslash f(H\cap D))$ to $\overset{\sim}{H}_i(E)$ is zero.
\end{proof}
\begin{definition}
Let
$$\ldots\longrightarrow A_n\longrightarrow \ldots \longrightarrow A_2 \longrightarrow A_1 \longrightarrow A_0$$
be a sequence of Abelian groups with morphisms. If $a,b\in\ZZ_{\geq 0}$ we will say the sequence is $a,b$-injective if for all $c\geq a$ and $x\in A_{b+c}$ if $x$ becomes zero in $A_a$ then $x$ becomes zero in $A_c$. We will say the sequence is $a,b$-surjective if for all $c\geq a$ and $x\in A_c$ if $x$ is reached by $A_{b+c}$ then $x$ is reached by $A_d$ for all $d\geq c$.
\end{definition}
\begin{lemma}
In the situation of last definition we have:
\begin{itemize}\label{iii}
\item[$(i)$] If the sequence is $a,b$-injective (resp.\ $a,b$-surjective) then it is $a',b'$-injective (resp.\ $a',b'$-surjective) for all $a'\geq a$, $b'\geq b$.
\item[$(ii)$] If all the morphisms $A_{i+1}\rightarrow A_i$ are zero then the sequence is $0,1$-injective and $0,1$-surjective.
\item[$(iii)$] If all the morphisms $A_{i+1}\rightarrow A_i$ are isomorphisms then the sequence is $0,0$-injective and $0,0$-surjective.
\item[$(iv)$] If the sequence is both $a,d$-injective and $a,b$-surjective then for all $c\geq a$ the natural map
$$\lim_{\longleftarrow}A_i\longrightarrow\im(A_{c+b}\rightarrow A_{c})$$
is an isomorphism.
\end{itemize}
\end{lemma}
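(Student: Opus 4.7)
Parts (i)--(iii) I would dispatch by unwinding the definitions. For (i), given $c\geq a'$ and an element witnessing one of the hypotheses at indices $(a',b')$, I would push it forward through the transition map from $A_{b'+c}$ to $A_{b+c}$ and invoke the $a,b$-version; the chain $c\geq a'\geq a$ makes everything legal, and the injective and surjective cases are symmetric. Part (ii) is vacuous because every composition of two consecutive maps is already zero, and (iii) is immediate because isomorphisms transport both vanishings and preimages.

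The content is in (iv), which I would split into the injectivity and surjectivity of the natural map $\lim_{\longleftarrow}A_i\to\im(A_{c+b}\to A_c)$. Injectivity is the lighter half: if $(a_i)$ is a compatible family with $a_c=0$, then $a_i=0$ for $i\leq c$ automatically by pushing down. For $i\geq a+d$ I would apply $a,d$-injectivity with auxiliary index $i-d\geq a$; since $a_i$ vanishes in $A_c$ and $c\geq a$ it also vanishes in $A_a$, so the hypothesis forces $a_{i-d}=0$. Letting $i$ vary, this gives $a_j=0$ for every $j\geq a$, and combined with the earlier range we get $(a_i)=0$.

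Surjectivity is the main obstacle. Naively lifting a preimage of $x\in\im(A_{c+b}\to A_c)$ one step at a time in $(A_i)$ fails, because a preimage in $A_{c+b}$ need not lie in the image of $A_{c+b+1}\to A_{c+b}$. My plan is to pass to the subsystem of stable images $S_N:=\im(A_{N+b}\to A_N)$. The $a,b$-surjectivity hypothesis tells us that $S_N=\im(A_m\to A_N)$ for every $m\geq N\geq a$, and it makes the restricted transition maps $S_{N+1}\to S_N$ surjective: given $u\in S_N=\im(A_{N+1+b}\to A_N)$, a preimage in $A_{N+1+b}$ factors through $A_{N+1}$ to yield a preimage of $u$ already in $S_{N+1}$. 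A classical tower argument (iteratively choose compatible preimages) then lifts any $u\in S_c$ to an element of $\lim_{\longleftarrow}S_N$, and a direct check shows $\lim_{\longleftarrow}S_N=\lim_{\longleftarrow}A_i$, since any coherent family $(a_i)$ lies level-wise in the $S_N$ by $a_N=\phi_{N+b,N}(a_{N+b})$. The conceptual heart of the argument is recognizing that $a,b$-surjectivity is precisely a Mittag-Leffler condition on the stable images; once this reframing is in place, the classical tower lifting closes the proof.
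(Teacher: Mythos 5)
Your treatment of (i)--(iii) and of the injectivity half of (iv) matches the paper (which leaves (i)--(iii) as exercises and proves injectivity by the same push-down argument, merely indexed so that one shows $x_i=0$ from $x_{d+i}$ rather than $a_{i-d}=0$ from $a_i$).

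For the surjectivity half of (iv), however, you take a genuinely different route. The paper fixes $b'=\max(b,d)$ and shows that for each $i\geq c$ there is a \emph{unique} element $x_i\in A_i$ that is reached by $A_{b'+i}$ and maps to $x$ in $A_c$; the uniqueness (which is what makes the $x_i$ automatically compatible) is exactly where the $a,d$-injectivity hypothesis enters. You instead pass to the system of stable images $S_N=\im(A_{N+b}\to A_N)$, observe that $a,b$-surjectivity makes the induced tower $S_{N+1}\to S_N$ surjective (using $S_N=\im(A_{N+1+b}\to A_N)$), lift by the standard Mittag-Leffler tower argument, and identify $\lim S_N$ with $\lim A_i$. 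This is a clean reframing: it exposes $a,b$-surjectivity as a Mittag-Leffler condition and, notably, it decouples the two hypotheses -- your surjectivity proof never invokes $a,d$-injectivity, whereas the paper's does. The trade-off is that the paper's construction produces a canonical compatible family, while yours makes non-canonical choices of preimages; both are perfectly adequate here since only existence is needed. One small slip in your exposition: $S_N=\im(A_m\to A_N)$ holds for $m\geq N+b$, not for all $m\geq N$ (for $N\leq m<N+b$ one only gets $S_N\subseteq\im(A_m\to A_N)$, and for $m=N$ the asserted equality would say $S_N=A_N$). Since you only use $m=N+1+b$, this does not affect the argument, but the blanket statement should be tightened.
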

\begin{proof}
The proof of $(i),(ii)$ and $(iii)$ is left as an exercise to the reader. As for $(iv)$ let us first define the natural map. An element of $\lim_{\longleftarrow}A_i$ is given by a sequence $(x_i)_{i\geq 0}$ with $x_i\in A_i$ where each $x_{i+1}$ gets mapped to $x_i$. The natural map is defined by mapping this sequence to $x_c$, which will be in the image of the map from $A_{b+c}$ to $A_c$.\newline
We now prove injectivity of the natural map. Suppose $x_c$ is zero, we have to prove that all the $x_i$ are zero. For $i\leq c$ this is clear so let $i>c$. Since $x_{d+i}$ gets mapped to zero in $A_a$ it must be mapped to zero in $A_i$, by $a,d$-injectivity, so $x_i=0$.\newline
We now prove surjectiviy of the natural map. Let $x\in A_c$ such that $x$ is in the image of the map from $A_{b+c}$ to $A_c$. By definition of $a,b$-surjective it is also in the image of $A_d$ for all $d\geq c$. Let $b'=\max(b,d)$. We claim that for each $i\geq c$ there is a unique element $x_i$ of $A_i$ that gets reached by $A_{b'+i}$ and gets mapped to $x$ in $A_c$. The existence follows from the above, suppose we have two such elements $x_i$ and $x'_i$ then $x_i-x'_i$ gets mapped to zero in $A_c$ while being reached by $A_{b'+i}$ and hence by $A_{d+i}$. By $a,d$-injectivity $x_i=x'_i$, proving uniqueness. This allows us to construct an element $(x_i)_{i\geq 0}$ of $\lim_{\longleftarrow}A_i$ that gets mapped to $x$ as clearly $x_c=x$.
\end{proof}
The idea will be to use lemma \ref{zeromap} to create a situation where $(ii)$ of lemma \ref{iii} happens, and use Mayer Vietoris sequences in combination with the next lemma to imitate the proof of the Jordan separation theorem.
\begin{lemma}\label{five}
Suppose we have a commutative diagram
\begin{center}
\begin{tikzcd}
\vdots &\vdots &\vdots &\vdots &\vdots\\
A_i\arrow{r}\arrow{u}&B_i\arrow{r}\arrow{u}&C_i\arrow{r}\arrow{u}&D_i\arrow{r}\arrow{u}&E_i\arrow{u}\\
A_{i+1}\arrow{r}\arrow{u}&B_{i+1}\arrow{r}\arrow{u}&C_{i+1}\arrow{r}\arrow{u}&D_{i+1}\arrow{r}\arrow{u}&E_{i+1}\arrow{u}\\
\vdots\arrow{u} &\vdots\arrow{u} &\vdots\arrow{u} &\vdots\arrow{u} &\vdots\arrow{u}\\
\end{tikzcd}
\end{center}
where $i$ runs through $\ZZ_{\geq 0}$ and suppose the rows are exact.\begin{itemize}
\item[$(i)$] If $(A_i)_{i\geq 0}$ is $a,b$-surjective, $(B_i)_{i\geq 0}$ is $a,c$-injective and $(D_i)_{i\geq 0}$ is $a+b,d$-injective then $(C_i)_{i\geq 0}$ is $a+b,c+d$-injective.
\item[$(ii)$] If $(B_i)_{i\geq 0}$ is $a,b$-surjective, $(D_i)_{i\geq 0}$ is $a+b,c$-surjective and $(E_i)_{i\geq 0}$ is $a+b,d$-injective then $(C_i)_{i\geq 0}$ is $a,b+c$-surjective.
\item[$(iii)$] If $(B_i)_{i\geq 0}$ and $(C_i)_{i\geq 0}$ are $a,b$-injective and $a,b$-surjective for whatever $a,b$ then the sequence
$$\lim_{\longleftarrow}B_i\longrightarrow\lim_{\longleftarrow}C_i\longrightarrow \lim_{\longleftarrow}D_i$$
is exact.
\end{itemize}
\end{lemma}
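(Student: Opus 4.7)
The plan is to dispatch (iii) by invoking part (iv) of the previous lemma, and to treat (i) and (ii) as diagram chases in the style of the five lemma, using the injectivity and surjectivity hypotheses to move elements between different rows of the tower.

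For (i), I would take $e \geq a+b$ and $\xi \in C_{(c+d)+e}$ vanishing in $C_{a+b}$, and first push $\xi$ into $D_{(c+d)+e}$; by commutativity this image vanishes in $D_{a+b}$, so the $a+b, d$-injectivity of $(D_i)$ applied at the index $c+e \geq a+b$ drops the vanishing down to $D_{c+e}$. Exactness of row $c+e$ then produces some $y \in B_{c+e}$ whose image in $C_{c+e}$ equals the image of $\xi$, and the image of $y$ in $B_{a+b}$ comes from an element $z_0 \in A_{a+b}$. The key observation is that the further image $z_a \in A_a$ of $z_0$ is reached by $A_{a+b}$ (namely by $z_0$ itself), so the $a,b$-surjectivity of $(A_i)$ lifts $z_a$ to some $z^* \in A_{c+e}$; subtracting the image of $z^*$ in $B_{c+e}$ from $y$ produces an element vanishing in $B_a$, and the $a,c$-injectivity of $(B_i)$ at index $e$ then forces vanishing in $B_e$. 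Exactness of row $e$ concludes.

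For (ii), I would fix $\xi \in C_{c'}$ reached by some $\xi_1 \in C_{(b+c)+c'}$ (with $c' \geq a$), fix a target level $d_0 \geq c'$, and aim to produce a preimage of $\xi$ in $C_{d_0}$. If $d_0 \leq b+c+c'$ the image of $\xi_1$ already works, so the substantive case is $d_0$ large. I would project the image $\eta_1 \in D_{b+c+c'}$ of $\xi_1$ down to $\eta_1' \in D_{b+c'}$: since $\eta_1'$ is reached by $\eta_1$ at index $c$ higher and $b+c' \geq a+b$, the $a+b,c$-surjectivity of $(D_i)$ lifts $\eta_1'$ to some $\eta_d \in D_d$ for any sufficiently large $d$. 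The image of $\eta_d$ in $E$ vanishes at level $b+c'$ and hence at $a+b$, and the $a+b,d$-injectivity of $(E_i)$ (writing $\delta$ for its injectivity parameter) pushes the vanishing down to $E_{d-\delta}$; exactness of row $d-\delta$ then yields some $\xi' \in C_{d-\delta}$ whose image in $D_{d-\delta}$ agrees with that of $\eta_d$. Comparing with $\xi_1$ at level $b+c'$ shows that the image of $\xi'$ in $C_{c'}$ differs from $\xi$ by the image of some $\beta \in B_{c'}$ that is itself reached by $B_{b+c'}$; the $a,b$-surjectivity of $(B_i)$ then lifts $\beta$ to $\tilde\beta \in B_{d-\delta}$. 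Correcting $\xi'$ by the image of $\tilde\beta$ produces the desired lift in $C_{d-\delta}$, which projects to the required preimage in $C_{d_0}$ once $d$ is chosen with $d - \delta \geq d_0$.

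For (iii), part (iv) of the previous lemma identifies $\lim_{\longleftarrow} B_i$ with $\im(B_{c+b} \to B_c)$ and $\lim_{\longleftarrow} C_i$ with $\im(C_{c+b} \to C_c)$ for any $c \geq a$, via projection to level $c$. Given $(x_i) \in \lim_{\longleftarrow} C_i$ mapping to zero in $\lim_{\longleftarrow} D_i$, exactness of row $c+b$ yields some $\tilde y \in B_{c+b}$ mapping to $x_{c+b}$; its image in $B_c$ lies in $\im(B_{c+b} \to B_c)$ and hence corresponds to a unique element of $\lim_{\longleftarrow} B_i$, which by commutativity maps to $(x_i)$. The main obstacle I anticipate is the index bookkeeping in part (ii), where several inequalities (relating $d$ to $d_0$, $\delta$, $b$ and $c'$) must be arranged simultaneously and the three hypotheses of (ii) interact intricately; part (i) is easier because the crucial reachability of $z_a$ from $A_{a+b}$ is automatic from the construction of $z_0$.
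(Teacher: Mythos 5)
Your proposal is correct and follows essentially the same approach as the paper: parts (i) and (ii) are proved by the same five-lemma-style diagram chase (pushing to $D$, lifting via exactness to $B$ and $A$, correcting by a boundary, and invoking the injectivity/surjectivity hypotheses at the appropriate levels), and part (iii) is handled by the same reduction to part (iv) of the preceding lemma on inverse limits. The only differences are cosmetic, such as renaming the injectivity parameter of $(E_i)$ to $\delta$ and choosing the high index $d$ first and setting the target level to $d-\delta$, where the paper fixes the target $f$ first and lifts to level $d+f$.
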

Note the formal similarity of $(i)$ and $(ii)$ with five lemma.
\begin{proof}
$(i)$ Let $e\geq a+b$ and $x\in C_{c+d+e}$ and suppose $x$ becomes zero in $C_{a+b}$, we have to show it becomes zero in $C_e$. Let us denote the image of $x$ in $C_i$ (resp.\ $D_i$) for $i\leq c+d+e$ by $x_{C,i}$ (resp.\ $x_{D,i}$). As $x_{C,a+b}=0$ we have $x_{D,a+b}=0$ and hence $x_{D,c+e}=0$ by the $a+b,d$-injectivity of $(D_i)_{i\geq 0}$. By exactness of the rows $x_{C,c+e}$ is reached by some $x_{B,c+e}\in B_{c+e}$. Because $x_{B,a+b}$ becomes zero in $C_{a+b}$ it is reached by some $x_{A,a+b}$, again by exactness. Denote its image in $A_{a}$ by $x_{A,a}$, then $x_{A,a}$ is reached by some $x_{A,c+e}$ by $a,b$-surjectivity of $(A_i)_{i\geq 0}$. Note that the image of $x_{A,c+e}$ in $A_{a+b}$ is not necessarily $x_{A,a+b}$. Denote the image of $x_{A,c+e}$ in $B_{c+e}$ by $x'_{B,c+e}$. Then $x_{B,c+e}-x'_{B,c+e}$ gets mapped to zero in $B_{a}$, so $x_{B,e}=x'_{B,e}$ by $a,c$-injectivity of $(B_i)_{i\geq 0}$. By exactness $x_{B,e}$ gets mapped to zero in $C_{e}$ as it is reached by $A_e$. So $x_{C,e}=0$, concluding the proof of $(i)$.\\

\noindent $(ii)$ Let $e\geq a$ and $x\in C_e$ so that $x$ gets reached by $C_{b+c+e}$, we have to prove that it gets reached by $C_{f}$ for all $f\geq e$. This is clear when $f\leq b+c+e$, so assume $f>b+c+e$. Let $x_{C,b+e}$ be an element of $C_{b+e}$ that gets mapped to $x=x_{C,e}$. Then $x_{C,b+e}$ gets reached by $C_{b+c+e}$ and so $x_{D,b+e}$ gets reached by $D_{b+c+e}$. By $a+b,c$-surjectivity of $(D_i)_{i\geq 0}$ $x_{D,b+e}$ gets reached by some $x_{D,d+f}\in D_{d+f}$. By exactness $x_{E,b+e}=0$ as it gets reached by $x_{C,b+e}$. By $a+b,d$-injectivity of $(E_i)_{i\geq 0}$ we have $x_{E,f}=0$. By exactness $x_{D,f}$ gets reached by some $x_{C,f}$. But this $x_{C,f}$ does not necessarily get mapped to $x_{C,e}=x$. Denote its image in $C_{b+e}$ by $x'_{C,b+e}$. Then by exactness $x_{C,b+e}-x'_{C,b+e}$ gets reached by some $x_{B,b+e}\in B_{b+e}$. Then by $a,b$-surjectivity $x_{B,e}$ gets reached by some $x_{B,f}$. Denote its image in $C_{f}$ by $x'_{C,f}$. This gets mapped to $x_{C,e}-x'_{C,e}$. Therefore $x=x_{C,e}$ gets reached by $x_{C,f}+x'_{C,f}$.\\

\noindent $(iii)$ Let $(x_i)_{i\geq 0}$ be an element of $\lim_{\longleftarrow}C_i$ that gets mapped to zero in $\lim_{\longleftarrow}D_i$. Then by exactness each $x_i$ gets reached by some element of $B_i$. Let $y$ be an element of $B_{a+b}$ that gets mapped to $x_{a+b}$ and let $y'$ be its image in $B_a$. By Lemma \ref{iii} $(iv)$ there exists a $(y_i)_{i\geq 0}\in \lim_{\longleftarrow}B_i$ with $y_a=y'$. Here we use the surjectivity of the natural map. Let $(x'_i)_{i\geq 0}$ be its image in $\lim_{\longleftarrow}C_i$, we claim that $x_i=x'_i$ and then we are done. Let $x''_i=x_i-x'_i$, then $x''_a=0$. Again by lemma \ref{iii} $(iv)$ we have $x''_i=0$ for all $i$. Here we use the injectivity of the natural map.
\end{proof}
Now we can use these results to mimic the proof of the usual Jordan separation theorem.
\begin{lemma}\label{n-1}
Let $S$ be a subset of $\RR^n$ and $f$ a homeomorphism from a closed ball $D$ in $\RR^{n-1}$ onto $S$. Suppose we have an infinite sequence of open balls $(B_i)_{i\geq 0}$ in $\RR^n$, each one containing the next one in the sequence, and all containing some point $p\in S$, such that for each $i\geq 0$ there exists a closed ball contained in $f^{-1}(B_i)$ whose image contains $B_{i+1}\cap S$. Also assume that $B_0\cap S$ is contained in $f\Big(\overset{\circ}{D}\Big)$. Then for all $j$ the sequence
$$\ldots\longrightarrow \overset{\sim}{H}_j(B_i\backslash S)\longrightarrow \ldots\longrightarrow\overset{\sim}{H}_j(B_0\backslash S)$$
is $n-1,n-1$-injective and $0,n-1$-surjective and
$$\lim_{\longleftarrow}\overset{\sim}{H}_j(B_i\backslash S)\cong
\begin{cases}
\ZZ&\textup{if }j=0\\
0&\textup{else.}\\
\end{cases}
$$
\end{lemma}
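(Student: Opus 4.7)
The strategy is to mimic the classical Jordan separation argument by inducting on the dimension of the subset of $D$ that we remove. After translating so that $q := f^{-1}(p) = 0$ in $\RR^{n-1}$, and (if necessary) slightly enlarging each of the auxiliary closed balls from the hypothesis so that $q$ lies in its interior, I will prove the following strengthening by induction on $k = 0, 1, \ldots, n-1$: for every $k$-dimensional linear subspace $L \subset \RR^{n-1}$, the sequence $\bigl(\overset{\sim}{H}_j(B_i \backslash f(L \cap D))\bigr)_{i \geq 0}$ is $(k, k)$-injective and $(0, k)$-surjective, and its inverse limit equals $\ZZ$ when $j = n - 1 - k$ and is $0$ otherwise. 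Taking $k = n-1$ and $L = \RR^{n-1}$ then yields the stated lemma.

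The base case $k = 0$ reduces to the complement of a single point: the inclusions $B_{i+1} \backslash \{p\} \hookrightarrow B_i \backslash \{p\}$ are homotopy equivalences (both sides deformation retract onto a small sphere around $p$), so Lemma \ref{iii}(iii) supplies the required injectivity and surjectivity, with inverse limit $\overset{\sim}{H}_j(S^{n-1})$, which is $\ZZ$ exactly at $j = n-1$.

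For the inductive step, fix a hyperplane $L' \subset L$ through $q$ and write $L = L^+ \cup L^-$ as the union of the two closed half-spaces of $L$ meeting along $L'$. Mayer--Vietoris on the open cover $\{B_i \backslash f(L^+ \cap D),\ B_i \backslash f(L^- \cap D)\}$ of $B_i \backslash f(L' \cap D)$, whose pairwise intersection is $B_i \backslash f(L \cap D)$, produces a commutative ladder of long exact sequences indexed by $i$. By Lemma \ref{zeromap} applied with $H = L^{\pm}$, the transition maps in each half-disk sequence $\bigl(\overset{\sim}{H}_j(B_i \backslash f(L^\pm \cap D))\bigr)_i$ vanish, so by Lemma \ref{iii}(ii),(i) these sequences are $(a, 1)$-injective and $(a, 1)$-surjective for every $a \geq 0$. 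Combined with the inductive properties of the $L'$-sequence, Lemma \ref{five}(i) with $a = k-1$, $b = 1$, $c = k-1$, $d = 1$ gives $(k, k)$-injectivity of the $L$-sequence, and Lemma \ref{five}(ii) with $a = 0$, $b = k-1$, $c = 1$, $d = k-1$ gives its $(0, k)$-surjectivity.

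For the inverse-limit computation, write $L^\pm$ as shorthand for the direct sum over the two signs. Two applications of Lemma \ref{five}(iii) to successive windows of the ladder produce an exact four-term sequence
$$\lim_{\longleftarrow} \overset{\sim}{H}_{j+1}(B_i \backslash f(L^\pm \cap D)) \to \lim_{\longleftarrow} \overset{\sim}{H}_{j+1}(B_i \backslash f(L' \cap D)) \to \lim_{\longleftarrow} \overset{\sim}{H}_j(B_i \backslash f(L \cap D)) \to \lim_{\longleftarrow} \overset{\sim}{H}_j(B_i \backslash f(L^\pm \cap D))$$
whose outer terms vanish because the underlying transition maps are zero. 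The resulting isomorphism $\lim_{\longleftarrow} \overset{\sim}{H}_j(B_i \backslash f(L \cap D)) \cong \lim_{\longleftarrow} \overset{\sim}{H}_{j+1}(B_i \backslash f(L' \cap D))$ shifts the unique nonzero degree from $n - 1 - (k-1)$ down to $n - 1 - k$, completing the induction. The main obstacle is the careful bookkeeping of the injectivity and surjectivity indices when feeding the Mayer--Vietoris ladder into Lemma \ref{five}, together with verifying that Lemma \ref{five}(iii) does apply at both of the two windows needed to obtain exactness at the endpoints of the four-term sequence above; the topological substance is already packaged inside Lemma \ref{zeromap}.
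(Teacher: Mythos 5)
Your proposal is correct and follows essentially the same route as the paper: translate so $f^{-1}(p)=0$, induct on the dimension of the removed linear subspace, use Lemma \ref{zeromap} and Lemma \ref{iii}(ii) to make the half-space sequences have zero transition maps, feed the Mayer--Vietoris ladder into Lemma \ref{five}(i),(ii) for the injectivity/surjectivity bookkeeping, and use Lemma \ref{five}(iii) to pass to inverse limits. The only cosmetic differences are that you phrase the induction over all $k$-dimensional subspaces rather than a fixed flag $L_0\subset\cdots\subset L_{n-1}$ (equivalent by linear symmetry), you mention enlarging the closed balls $D'$ so that $q$ is interior (which is automatic since $q\in f^{-1}(B_{i+1})$, an open subset of $D$ in which $q$ is interior), and you are a bit more explicit than the paper that two separate applications of Lemma \ref{five}(iii) are needed to obtain the four-term exact inverse-limit sequence.
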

Note that the balls do not have to be concentric, and nor does their intersection have to equal $\{p\}$, even though intuitively one expects the intersection to be $\{p\}$.
\begin{proof}
We can assume $f(0)=p$, otherwise we compose $f$ with a translation in $\RR^{n-1}$. For $0\leq j\leq n-1$ define
$$L_j=\{(x_1,\ldots,x_j,0,\ldots,0)\in D\},$$
$$H_j^+=\{(x_1,\ldots,x_j,0,\ldots,0)\in D|x_j\geq 0\}\text{ and}$$
$$H_j^-=\{(x_1,\ldots,x_j,0,\ldots,0)\in D|x_j\leq 0\}.$$
By lemma \ref{zeromap} the maps in the following sequences are all zero:
$$\ldots\longrightarrow \overset{\sim}{H}_j(B_i\backslash f(H_j^+))\longrightarrow \ldots\longrightarrow\overset{\sim}{H}_j(B_0\backslash f(H_j^+))$$
$$\ldots\longrightarrow \overset{\sim}{H}_j(B_i\backslash f(H_j^-))\longrightarrow \ldots\longrightarrow\overset{\sim}{H}_j(B_0\backslash f(H_j^-)).$$
So we have
$$\lim_{\longleftarrow}\overset{\sim}{H}_j(B_i\backslash f(H_j^+))=\lim_{\longleftarrow}\overset{\sim}{H}_j(B_i\backslash f(H_j^-))=0$$
and by lemma \ref{iii} $(ii)$ these sequences are all $0,1$-injective and $0,1$-surjective. Next note that $L_0$ is just $\{0\}$ so $B_i\backslash f(L)=B_i\backslash\{p\}$ and the reduced homology of this is zero except the $n-1$-th homology is $\ZZ$. Therefore the sequence
$$\ldots\longrightarrow \overset{\sim}{H}_j(B_i\backslash f(L_0))\longrightarrow \ldots\longrightarrow\overset{\sim}{H}_j(B_0\backslash f(L_0))$$
consists of isomorphisms, so by lemma \ref{iii} $(iii)$ it is $0,0$-injective and $0,0$-surjective and of course we have
$$\lim_{\longleftarrow}\overset{\sim}{H}_j(B_i\backslash L_0)\cong
\begin{cases}
\ZZ&\textup{if }j=n-1\\
0&\textup{else.}\\
\end{cases}
$$
Our strategy is to prove by induction on $j'$ that the sequence
$$\ldots\longrightarrow \overset{\sim}{H}_j(B_i\backslash f(L_{j'}))\longrightarrow \ldots\longrightarrow\overset{\sim}{H}_j(B_0\backslash f(L_{j'}))$$
is $j',j'$-injective and $0,j'$-surjective and that
$$\lim_{\longleftarrow}\overset{\sim}{H}_j(B_i\backslash L_{j'})\cong
\begin{cases}
\ZZ&\textup{if }j=n-1-j'\\
0&\textup{else.}\\
\end{cases}
$$
For $j'=n-1$ this gives the desired statement as $f(L_{n-1})=f(D)=S$. We already did the induction basis $j'=0$. Now for the induction step suppose it is true for some $j'$, we prove it for $j'+1$. For any $i\geq 0$ we have that
$$B_i\backslash f(L_{j'})=B_i\backslash f(H^+_{j'+1})\cup B_i\backslash f(H^-_{j'+1})\text{ and}$$
$$B_i\backslash f(L_{j'+1})=B_i\backslash f(H^+_{j'+1})\cap B_i\backslash f(H^-_{j'+1}).$$
Since these are open sets we have a Mayer-Vietoris sequence
$$\ldots\longrightarrow \overset{\sim}{H}_{j+1}(B_i\backslash f(H^+_{j'+1}))\oplus\overset{\sim}{H}_{j+1}(B_i\backslash f(H^{-}_{j'+1}))\longrightarrow\overset{\sim}{H}_{j+1}(B_i\backslash f(L_{j'}))$$
$$\longrightarrow\overset{\sim}{H}_j(B_i\backslash f(L_{j'+1}))\longrightarrow\overset{\sim}{H}_j(B_i\backslash f(H^+_{j'+1}))\oplus\overset{\sim}{H}_j(B_i\backslash f(H^{-}_{j'+1}))\longrightarrow\ldots$$
and we are in a situation where we can apply lemma \ref{five}, with
\begin{align}
&A_i=\overset{\sim}{H}_{j+1}(B_i\backslash f(H^+_{j'+1}))\oplus\overset{\sim}{H}_{j+1}(B_i\backslash f(H^{-}_{j'+1}))\nonumber \\
&B_i=\overset{\sim}{H}_{j+1}(B_i\backslash f(L_{j'}))\nonumber \\
&C_i=\overset{\sim}{H}_j(B_i\backslash f(L_{j'+1}))\nonumber \\
&D_i=\overset{\sim}{H}_j(B_i\backslash f(H^+_{j'+1}))\oplus\overset{\sim}{H}_j(B_i\backslash f(H^{-}_{j'+1}))\nonumber \\
&E_i=\overset{\sim}{H}_j(B_i\backslash f(L_{j'+1}))\nonumber
\end{align}
So $(A_i)_{i\geq 0}$ and $(D_i)_{i\geq 0}$ are $0,1$-injective and $0,1$-surjective and by induction hypothesis $(B_i)_{i\geq 0}$ and $(E_i)_{i\geq 0}$ are $j',j'$-injective and $0,j'$-surjective. By lemma \ref{five} $(i)$ and $(ii)$ (and lemma \ref{iii} $(i)$) $(C_i)_{i\geq 0}$ is $j'+1,j'+1$-injective and $0,j'+1$-surjective. By applying $(iii)$ of the same lemma we get a long exact sequence of inverse limits
$$\ldots\longrightarrow\lim_{\longleftarrow}A_i\longrightarrow\lim_{\longleftarrow}B_i\longrightarrow\lim_{\longleftarrow}C_i\longrightarrow \lim_{\longleftarrow}D_i\longrightarrow\ldots.$$
But $\lim_{\longleftarrow}A_i=\lim_{\longleftarrow}D_i=0$ so we get isomorphisms
$$\lim_{\longleftarrow}\overset{\sim}{H}_{j+1}(B_i\backslash f(L_{j'}))\longrightarrow\lim_{\longleftarrow}\overset{\sim}{H}_j(B_i\backslash f(L_{j'+1})).$$
By induction hypothesis the left hand side is $\ZZ$ if $j+1=n-1-j'$ and zero otherwise. Therefore the right hand side is $\ZZ$ if $j=n-1-(j'+1)$ and zero otherwise, finishing the proof.
\end{proof}
\begin{proof}[Proof of theorem \ref{localsep}]
We can assume that $S$ is the image of a homeomorphism $f$ from a closed ball $D$ in $\RR^{n-1}$.
Let us call a pair of balls $B'_1,B'_2$ in $\RR^n$ `good' if $B_2'\subset B_1'$ and there exists a closed ball $D'$ in $f^{-1}(B'_1)$ whose image contains $B'_2\cap S$.
We will prove that for any sequence $B'_0,\ldots,B'_{2n-2}$ of open balls centred at $x$ where any two consecutive balls are a good pair, and where $B'_0\cap S\subset f\Big(\overset{\circ}{D}\Big)$, the balls $B_1:=B'_{n-1}$ and $B_2:=B'_{2n-2}$ satisfy the theorem. It is enough to show for any point $p\in B_2\cap S$ that any connected component of $B_1\backslash S$ is either disjoint with $B_2$ or has $p$ in its closure and there are exactly two such components with $p$ in their closure. To this end choose balls $B'_i$, $i>2n-2$ such that each pair $B'_i,B'_{i+1}$, $i\geq 0$ is good, and such that the intersection of all these balls is $\{p\}$. By lemma \ref{n-1} the sequence
$$\ldots\longrightarrow \overset{\sim}{H}_0(B'_i\backslash S)\longrightarrow \ldots\longrightarrow\overset{\sim}{H}_0(B'_0\backslash S)$$
is $n-1,n-1$-injective and $0,n-1$-surjective and the inverse limit is isomorphic to $\ZZ$. By lemma \ref{iii} $(iv)$ (and $(i)$) the image of the map
$$\alpha_i:\overset{\sim}{H}_0(B'_i\backslash S)\longrightarrow\overset{\sim}{H}_0(B'_{n-1}\backslash S)$$
is isomorphic to $\ZZ$ for all $i\geq 2n-2$. For any $i\geq 0$ let $C_i$ be the set of all connected components of $B'_i\backslash S$, then
$$\overset{\sim}{H}_0(B'_i\backslash S)\cong\ker\Big(\bigoplus_{C_i}\ZZ\longrightarrow\ZZ\Big),$$
where the map $\bigoplus_{C_i}\ZZ\longrightarrow\ZZ$ maps any element of any copy of $\ZZ$ to itself in $\ZZ$. The image of the map $\alpha_i$ is then
$$\im\alpha_i\cong\ker\Big(\bigoplus_{\im(C_i\rightarrow C_{n-1})}\ZZ\longrightarrow\ZZ\Big).$$
The direct sum is over all connected components of $B'_{n-1}\backslash S$ that contain some connected component of $B'_i\backslash S$. So the fact that for all $i\geq 2n-2$ the image of $\alpha_i$ is isomorphic to $\ZZ$ means that exactly two connected components of $B'_{n-1}\backslash S$ contain some connected component of $B'_i\backslash S$. Of course those two connected components of $B'_{n-1}\backslash S$ will not depend on the choice of $i\geq 2n-2$. Since the intersection of the $B'_i$ is $\{p\}$ these two connected components must have $p$ in their closure. Because only these two connected components of $B'_{n-1}\backslash S$ contain some connected component of $B'_{2n-2}\backslash S$, all other connected components of $B'_{n-1}\backslash S$ are disjoint with all connected components of $B'_{2n-2}\backslash S$, and are therefore disjoint with $B'_{2n-2}$. This concludes the proof.
\end{proof}
\section{Proof of theorem \ref{twocomponents}}\label{proof}
In this section we use the local separation theorem (theorem \ref{localsep}) to prove theorem \ref{twocomponents}.
\begin{lemma}\label{agree}
Let $X$ be an $n$-dimensional topological manifold and let $Y$ be a closed subset that is an $n-1$-dimensional submanifold. Suppose $B_1,B_2$ and $B'_1,B'_2$ are pairs of open balls in $X$ with respect to two possibly different coordinate systems, and suppose they each satisfy the condition in theorem \ref{localsep}. If $U$ is a connected component of $B_1\cap B_2$ then any two points $x,y\in U\backslash Y$ belong to the same connected component of $B_1\backslash Y$ if and only if they belong to the same connected component of $B'_1\backslash Y$.
\end{lemma}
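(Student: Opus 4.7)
The statement appears to contain a small typographical slip: with $B_2 \subset B_1$, the set $B_1 \cap B_2$ reduces to $B_2$, which is already connected, and in any case membership in components of $B_1' \setminus Y$ presupposes the points lie in $B_1'$. I plan under the natural reading that $U$ is a connected component of $B_2 \cap B_2'$; the other natural reading (with $B_1 \cap B_1'$) reduces to this after shrinking, and by symmetry it suffices to prove the forward implication.

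Write $C_1, C_2$ for the two distinguished components of $B_1 \setminus Y$ supplied by theorem \ref{localsep} (those whose closure contains $B_2 \cap Y$), and $C_1', C_2'$ for the analogues for $B_1' \setminus Y$. Since $U \subset B_2 \cap B_2'$, every $x \in U \setminus Y$ lies in a unique $C_i$ and a unique $C_j'$, giving two locally constant functions $f, f' \colon U \setminus Y \to \{1,2\}$. The claim reduces to showing that $f$ and $f'$ induce the same partition; fixing $x_0 \in U \setminus Y$ and relabeling so that $f(x_0) = f'(x_0) = 1$, it is enough to prove that $S := \{x \in U \setminus Y : f(x) = f'(x)\}$ equals $U \setminus Y$.

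The heart of the argument is a local comparison at each point $p \in U \cap Y$. Apply theorem \ref{localsep} at $p$ inside a small ball $W \subset U$ to obtain distinguished components $D_1, D_2$ of $W \setminus Y$. Each $D_k$ has $p$ in its closure, so $D_k \subset C_{i_k}$ for some $i_k \in \{1,2\}$. I claim $i_1 \neq i_2$: if both $D_1, D_2$ lay in $C_1$, then because the remaining components of $W \setminus Y$ are disjoint from $W$'s inner ball, $C_2 \cap W$ would be disjoint from every neighborhood of $p$, contradicting $p \in B_2 \cap Y \subset \overline{C_2}$. The same reasoning gives $D_1, D_2$ in different $C_j'$. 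So locally near $p$ the two partitions restrict to the single partition $\{D_1, D_2\}$ of $W \setminus Y$ up to relabeling; in particular each of $D_1, D_2$ lies entirely in $S$ or entirely in $S^c$.

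The globalization is then a connectedness argument on $U$. The set $S$ is open in $U \setminus Y$ (as $f, f'$ are locally constant), and so is $S^c := (U \setminus Y) \setminus S$. The previous paragraph shows that each $p \in U \cap Y$ has a neighborhood in $U$ whose intersection with $U \setminus Y$ lies entirely in $S$ or entirely in $S^c$, so $p$ belongs to the closure (in $U$) of exactly one of $S, S^c$. Hence $\overline{S}$ and $\overline{S^c}$ (closures in $U$) are disjoint closed subsets of $U$ whose union is $U$; connectedness of $U$ and $x_0 \in S$ force $\overline{S^c} = \emptyset$, giving $S = U \setminus Y$. The main obstacle is the local dichotomy $i_1 \neq i_2$ of the preceding paragraph; once that is secured the globalization and all remaining bookkeeping are routine.
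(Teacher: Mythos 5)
Your reading of the statement (with $U$ a component of $B_2\cap B'_2$ rather than $B_1\cap B_2$) matches the intended meaning as confirmed by how the lemma is used later in the paper, and your proof is correct. It is also essentially the paper's proof: the paper likewise applies theorem \ref{localsep} at $p\in U\cap Y$ to show that the two-element sets of distinguished components for $B_1,B_2$ and $B'_1,B'_2$ are put in bijection via the distinguished components of the new small pair, and then globalizes over $U$ by a connectedness argument (the paper phrases it via the equivalence relation of ``good pairs'' and shows $\overline{U'}\cap U$ is clopen in $U$, which is the same partition-into-clopen-sets reasoning you give for $\overline{S}$ and $\overline{S^c}$).
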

Intuitively this lemma says that different pairs of balls like in theorem \ref{localsep} locally agree on whether or not two points are on the same side of $Y$.
\begin{proof}
We first prove that any point $p\in U\cap Y$ has a neighbourhood $V$ such that any two points $x,y\in V\backslash Y$ belong to the same connected component of $B_1\backslash Y$ if and only if they belong to the same connected component of $B'_1\backslash Y$. This follows by applying theorem \ref{localsep} to obtain $B''_1,B''_2$ with
$$p\in B''_2\subset B''_1\subset U$$
and taking $V=B''_2$. Let us explain why this works. We have a map $\iota$ (resp.\ $\iota'$) from the set of connected components of $B''_1\backslash Y$ that are not disjoint with $B''_2$ to the set of connected components of $B_1\backslash Y$ (resp.\ $B'_1\backslash Y$) that are not disjoint with $B_2$ (resp.\ $B'_2$). These maps are surjective because each connected component of $B_1\backslash Y$ (resp.\ $B'_1\backslash Y$) not disjoint with $B_2$ (resp.\ $B'_2$) has $p$ in its closure, and therefore contains some point of $B''_2$. Note that the domain and image of $\iota,\iota'$ each have two elements, so they are bijections. It follows that two points of $V\backslash Y= B''_2\backslash Y$ belong to the same connected component of $B_1\backslash Y$ if and only if they belong to the same connected component of $B''_1\backslash Y$ if and only if they belong to the same connected component of $B'_1\backslash Y$.\\

Now we use this local result to prove the lemma. Let us call a pair of points in $U\backslash Y$ good if they belong to the same connected component of $B_1\backslash Y$ if and only if they belong to the same connected component of $B'_1\backslash Y$. Now if $x,y$ is a good pair of points and $y,z$ is a good pair of points then so is $x,z$. This is because there are only two connected components of $B_1\backslash Y$ (resp.\ $B'_1\backslash Y$) that $x$, $y$ and $z$ could be in. If for instance $x$ and $y$ belong to different connected components of $B_1\backslash Y$ (resp.\ $B'_1\backslash Y$) and $y$ and $z$ belong to different connected components of $B_1\backslash Y$ (resp.\ $B'_1\backslash Y$) then $x$ and $z$ must belong to the same connected component of $B_1\backslash Y$ (resp.\ $B'_1\backslash Y$). Now in fact this means that being a good pair of points is an equivalence relation on $U\backslash Y$. We have to use the connectedness of $U$ to prove that there is only one equivalence class. Since pairs of points belonging to the same connected component of $U\backslash Y$ are clearly good, we know that each equivalence class must be a union of connected components of $U\backslash Y$. Let $U'$ be an equivalence class (which must of course be open). Let $Y'=Y\cap\overline{U'}\cap U$. If we can prove that $U'\cup Y'$ is both open and closed for the induced topology on $U$ then by connectedness it must be all of $U$ and then $U'=(U'\cup Y')\backslash Y=U\backslash Y$ and we are done.\newline 
Before we do so we prove that $U'\cup Y'=\overline{U'}\cap U$. Clearly $U'\cup Y'\subset\overline{U'}\cap U$. That $\overline{U'}\cap U\subset U'\cup Y'$ is proved as follows. First note that $U'$ is closed for the induced topology on $U\backslash Y$, since it is a union of connected components, and so is its complement, and therefore its complement is open, so $U'$ is closed. Now let $x\in\overline{U'}\cap U$, if $x\in Y$ then $x\in Y\cap\overline{U'}\cap U=Y'$ so $x\in U'\cup Y'$. If $x\notin Y$ then $x\in U\backslash Y$ which together with $x\in\overline{U'}$ implies that $x\in U'$ by the closedness of $U'$ in $U\backslash Y$, so $x\in U'\cup Y'$, proving the other inclusion.\newline
That $U'\cup Y'$ is closed for the induced topology on $U$ is now clear since it equals $\overline{U'}\cap U$.\\

We now prove that $U'\cup Y'$ is open by showing that all of its points are interior points. Let $x\in U'\cup Y'$, if $x\in U'$ then it is obviously an interior point since $U'$ is open. So suppose $x\in Y'$. Then by the first part of the proof there is an open set $V\subset U$ containing $x$ such that all pairs of points in $V\backslash Y$ are good. In other words $V\backslash Y$ is contained in a single equivalence class. As $x$ is in the closure of $U'$ we know that $V\cap U'=(V\backslash Y)\cap U'$ is nonempty. Therefore, Since $U'$ is an equivalence class it must contain $V\backslash Y$. Hence $U'\cup Y'=\overline{U'}\cap U$ contains $\overline{V\backslash Y}\cap U$, which in turn contains $V$ as $Y$ has empty interior. This proves that $x$ is an interior point.
\end{proof}
\noindent\textbf{Remark}\vspace{5 pt}\newline
\noindent Note that if we take points $x,y$ in different connected components of $B_2\cap B'_2$ then the pairs $B_1,B_2$ and $B'_1,B'_2$ might disagree on whether or not $x$ and $y$ are on the same side of $Y$. As an example consider two rectangular strips $B_1=B_2$ and $B'_1=B'_2$ that we paste together to form a M\"obius strip $X$, and for $Y$ we take a line that goes around the M\"obius strip once, and we take $x$ from one piece of overlap and $y$ from the other piece of overlap, then $B_1,B_2$ and $B'_1,B'_2$ will disagree on whether $x$ and $y$ are on the same side of $Y$ or on opposite sides of it.\newpage

\noindent\textbf{Construction}\normalsize\vspace{5 pt}\newline
\noindent Let $X$ be an $n$-dimensional manifold and $Y$ an $n-1$-dimensional submanifold that is a closed subset of $X$. Let $x,y\in X\backslash Y$ and let $\gamma:[0,1]\longrightarrow X$ be a path from $x$ to $y$ (so $\gamma(0)=x$ and $\gamma(1)=y$). We will construct an element $e(\gamma)$ of $\ZZ/2\ZZ$ that captures the intuition of whether the path $\gamma$ passes through $Y$ an even or an odd number of times. It will be defined in such a way that if $\im\gamma$ is contained in some $B_2$ where $B_1,B_2$ is as in theorem \ref{localsep} then $e(\gamma)=0$ if and only if $x$ and $y$ belong to the same connected component of $B_1\backslash Y$. The construction is as follows. We choose a set $0=a_0<a_1<\ldots<a_k=1$ and for each $i=1,\ldots,k$ we choose a pair $B_{1,i},B_{2,i}$ as in theorem \ref{localsep} such that $\gamma([a_{i-1},a_i])\subset B_2$ unless $\gamma([a_{i-1},a_i])$ is disjoint with $Y$, in which case we put $B_{1,i}=B_{2,i}$ equal to some connected open set disjoint with $Y$ containing $\gamma([a_{i-1},a_i])$. Next for each $i=1,\ldots,k-1$ we choose a path $\gamma_i:[0,1]\longrightarrow B_{2,i}\cap B_{2,i+1}$ from $\gamma(a_i)$ to some point not in $Y$. We take $\gamma_0$ to be the constant path from $x$ to itself and $\gamma_k$ the constant path from $y$ to itself. For each $i=1,\ldots,k$ we define $e(a_{i-1},a_i)$ as $0\in\ZZ/2\ZZ$ if $\gamma_{i-1}(1)$ and $\gamma_i(1)$ belong to the same connected component of $B_{1,i}\backslash Y$
and as $1\in\ZZ/2\ZZ$ if they don't. Finally we define
$$e(\gamma)=e(a_0,a_1)+\ldots+e(a_{k-1},a_k).$$
\begin{lemma}
This construction is always possible and does not depend on the choice of $k,a_i,\gamma_i,B_{1,i},B_{2,i}$.
\end{lemma}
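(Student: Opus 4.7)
For each $t\in[0,1]$, pick an open interval $I_t$ around $t$ such that either $\gamma(I_t)\cap Y=\emptyset$ (if $\gamma(t)\notin Y$; take a connected $Y$-disjoint open set containing $\gamma(I_t)$ to serve as $B_{1,i}=B_{2,i}$) or $\gamma(I_t)\subset B_2$ for some pair $(B_1,B_2)$ at $\gamma(t)$ from theorem \ref{localsep} (if $\gamma(t)\in Y$). By compactness, extract a finite subcover and refine to a partition $\{a_i\}$ with an assigned pair $(B_{1,i},B_{2,i})$ on each $[a_{i-1},a_i]$. For each interior $a_i$, the set $B_{2,i}\cap B_{2,i+1}$ is an open neighborhood of $\gamma(a_i)$; since $Y$ has empty interior (as an $(n-1)$-submanifold of an $n$-manifold) and $X$ is locally path-connected, $\gamma(a_i)$ can be joined inside this intersection to a nearby point off $Y$, producing the required $\gamma_i$.

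\textbf{Independence.} My plan is to prove three invariance properties of $e(\gamma)$ under local modifications of the data and then conclude via a common refinement of the partitions. \emph{(a) Changing a single $\gamma_i$ (keeping partition and balls fixed):} if $\gamma'_i$ is another valid choice, both $\gamma_i(1)$ and $\gamma'_i(1)$ lie in the connected component of $B_{2,i}\cap B_{2,i+1}$ containing $\gamma(a_i)$; lemma \ref{agree} applied to the pairs $(B_{1,i},B_{2,i})$ and $(B_{1,i+1},B_{2,i+1})$ shows these endpoints agree on same-component membership in $B_{1,i}\setminus Y$ and in $B_{1,i+1}\setminus Y$, so the affected summands $e(a_{i-1},a_i)$ and $e(a_i,a_{i+1})$ flip together or not at all. \emph{(b) Inserting $a^{*}\in(a_{i-1},a_i)$ and reusing $(B_{1,i},B_{2,i})$ on both halves:} the three points $\gamma_{i-1}(1),\gamma^{*}(1),\gamma_i(1)$ lie in $B_{2,i}\setminus Y$, hence, by theorem \ref{localsep}, in one of at most two components of $B_{1,i}\setminus Y$ meeting $B_{2,i}$; the $\ZZ/2\ZZ$ identity $[A\neq B]+[B\neq C]\equiv[A\neq C]$ gives $e(a_{i-1},a^{*})+e(a^{*},a_i)=e(a_{i-1},a_i)$. \emph{(c) Swapping $(B_{1,i},B_{2,i})$ for another valid pair $(B'_{1,i},B'_{2,i})$:} first invoke (a) to shrink $\gamma_{i-1}$ and $\gamma_i$ so their images sit in $B_{2,i}\cap B'_{2,i}$ as well as in the adjacent $B_2$'s; then $\gamma_{i-1}(1)$ and $\gamma_i(1)$ lie in a single connected component of $B_{2,i}\cap B'_{2,i}$, joined there by the concatenation $\overline{\gamma_{i-1}}\cdot\gamma|_{[a_{i-1},a_i]}\cdot\gamma_i$, so lemma \ref{agree} equates the $B_{1,i}$- and $B'_{1,i}$-based values of $e(a_{i-1},a_i)$.

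Given two data sets $D$ and $D'$, form the common refinement $\{b_r\}=\{a_i\}\cup\{a'_j\}$ of the partitions; refine both via (b), then swap ball pairs via (c) and adjust paths via (a) into a common choice, yielding $e(D)=e(D')$. The main obstacle is step (c): one must check that the endpoints of the adjacent connecting paths really lie in a single connected component of the intersection $B_{2,i}\cap B'_{2,i}$, which is not automatic from the abstract setup but is resolved by the explicit concatenation through $\gamma$ itself, exploiting that $\gamma([a_{i-1},a_i])$ sits in both $B_{2,i}$ and $B'_{2,i}$.
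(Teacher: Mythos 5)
Your proof is correct and follows essentially the same plan as the paper: compactness for existence, then independence via the three local moves (change a $\gamma_i$, insert a partition point while reusing the same ball pair, swap the ball pair for another one covering the same subarc), each justified by lemma \ref{agree}, concluded by passing to a common refinement of the two partitions. The only detail you leave implicit, which the paper handles in a short remark, is the degenerate case where some $B_{1,i}=B_{2,i}$ is a $Y$-disjoint connected open set rather than a pair from theorem \ref{localsep}, so lemma \ref{agree} does not literally apply; there the conclusion is immediate since $B_{2,i}\cap B_{2,i+1}$ avoids $Y$, and your explicit exhibition in step (c) of the connecting path inside $B_{2,i}\cap B'_{2,i}$ is a worthwhile clarification of a point the paper glosses over.
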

Note that the reason why we have to take $\gamma_i$ is because $\gamma(a_i)$ may be a point of $Y$, in which case we have to take a point close to $\gamma(a_i)$ that is not in $Y$.
\begin{proof}
By theorem \ref{localsep} the $B_2$ coming from that theorem cover $Y$. Therefore the inverse images $\gamma^{-1}(B_2)$ together with $\gamma^{-1}(X\backslash Y)$ yield an open cover of $[0,1]$. So we can choose $a_1,\ldots,a_{k-1}$ so that each interval $[a_{i-1},a_i]$ for $i=1,\ldots,k$ is contained in some open set of this cover. Then by construction the desired $B_{i,1},B_{i,2}$ exist or $\gamma([a_{i-1},a_i])$ is disjoint with $Y$, in which case we just take $B_{1,i}=B_{2,i}$ to be some connected open neighbourhood of $\gamma([a_{i-1},a_i])$ that is disjoint with $Y$. That the desired $\gamma_i$ exist follows from the fact that $Y$ has empty interior.\\

We now prove that $e(\gamma)$ does not depend on the choices made. First of all suppose we change the choice of $\gamma_i$ for some $i$. Let $\gamma'_i$ be another path in $B_{2,i}\cap B_{2,i+1}$ with $\gamma'_i(0)=\gamma_i(0)=\gamma(a_i)$. Let $e'(a_{i-1},a_i)$ and $e'(a_i,a_{i+1})$ be the resulting elements of $\ZZ/2\ZZ$. By construction $\gamma_i(1)$, $\gamma(a_i)$ and $\gamma'_i(1)$ all belong to the same connected component of $B_{2,i}\cap B_{2,i+1}$. By lemma \ref{agree} the pairs $B_{1,i},B_{2,i}$ and $B_{1,i+1},B_{2,i+1}$ agree on whether or not $\gamma_i(1)$ and $\gamma'_i(1)$ are on the same side of $Y$ or on opposite sides. If they are on the same side of $Y$ then $e(a_{i-1},a_i)=e'(a_{i-1},a_i)$ and $e(a_i,a_{i+1})=e'(a_i,a_{i+1})$. If they are on opposite sides of $Y$ then $e(a_{i-1},a_i)=e'(a_{i-1},a_i)+1$ and $e(a_i,a_{i+1})=e'(a_i,a_{i+1})+1$. But in both cases the sum doesn't change because we work modulo two. So $e(\gamma)$ doesn't depend on the choice of the $\gamma_i$.\newline Note that if we are in the case where $B_{1,i}=B_{2,i}$ is disjoint with $Y$ or $B_{1,i+1}=B_{2,i+1}$ is disjoint with $Y$ then clearly $\gamma_i(1)$ and $\gamma'_i(1)$ are on the same side of $Y$ as they belong to the same connected component of $B_{2,i}\cap B_{2,i+1}$ which is disjoint with $Y$.\\

Next we show that for given $k,a_0,\ldots,a_k$ the result does not depend on the choice of $B_{1,i},B_{2,i}$. Suppose we replace some pair $B_{1,i},B_{2,i}$ by another pair $B'_{1,i},B'_{2,i}$ that still contains $\gamma([a_{i-1},a_i])$. If $i\neq 0$ one can always take a path $\gamma_{i-1}$ whose image is contained in $B_{2,i-1}\cap B_{2,i}\cap B'_{2,i}$. Likewise, if $i\neq k$ one can always take a $\gamma_i$ whose image is contained in $B_{2,i}\cap B'_{2,i}\cap B_{2,i+1}$. By construction $\gamma_{i-1}(1)$ and $\gamma_i(1)$ belong to the same connected component of $B_{2,i}\cap B'_{2,i}$. By lemma \ref{agree} both pairs $B_{1,i},B_{2,i}$ and $B'_{1,i},B'_{2,i}$ agree on whether $\gamma_{i-1}(1)$ and $\gamma_i(1)$ are on the same side of $Y$ or not, so $e(\gamma)$ doesn't change.\\

Finally one has to show that $e(\gamma)$ doesn't depend on the choice of intermediate points $0<a_1<\ldots<a_{k-1}<1$. If one has a second set of intermediate points $0<b_1<\ldots<b_{k'-1}<1$ then one can always throw both sets of points together into a set $0<c_1<\ldots<c_{k''-1}<1$ with
$$\{a_1,\ldots,a_{k-1}\}\cup\{b_1,\ldots,b_{k'-1}\}=\{c_1,\ldots,c_{k''-1}\}.$$
Of course by induction it is enough to show that adding a single extra point to the list $0<a_1<\ldots<a_{k-1}<1$ doesn't change $e(\gamma)$. Suppose we add the point $a$ with $a_{i-1}<a<a_i$ then we have to show that $$e(a_{i-1},a_i)=e(a_{i-1},a)+e(a,a_i).$$
But $\gamma([a_{i-1},a_i])$ is contained in some $B_{2,i}$ and one can choose a path $\gamma':[0,1]\longrightarrow B_{2,i}$ from $a$ to a nearby point not in $Y$ and then all the involved points $\gamma_{i-1}(1)$, $\gamma'(1)$ and $\gamma_{i}(1)$ are contained in $B_{2,i}$, so the result follows.
\end{proof}
\begin{lemma}\label{ii}
Let $X$ be an $n$-dimensional manifold and $Y$ an $n-1$-dimensional submanifold that is a closed subset of $X$.
\begin{itemize}
\item[$(i)$] Let $x$, $y$ and $z$ be points of $X\backslash Y$ and let $\gamma_1$ and $\gamma_2$ be paths from $x$ to $y$ and from $y$ to $z$ respectively. Let $\gamma_3$ be the composition of these two paths (which goes from $x$ to $z$). Then $e(\gamma_1)+e(\gamma_2)=e(\gamma_3)$.
\item[$(ii)$] Let $x,y\in X\backslash Y$ and let $\gamma_1,\gamma_2$ be two paths from $x$ to $y$. If $\gamma_1$ and $\gamma_2$ are path-homotopic then $e(\gamma_1)=e(\gamma_2)$
\end{itemize}
\end{lemma}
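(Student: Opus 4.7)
The plan is to reduce both statements to a direct unwinding of the definition of $e(\gamma)$, invoking the invariance of $e$ under all choices from the preceding lemma.

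For $(i)$, parameterize the concatenation so that $\gamma_3(1/2)=y\in X\setminus Y$. To compute $e(\gamma_3)$, take the subdivision of $[0,1]$ obtained by concatenating the rescaled subdivisions used for $e(\gamma_1)$ and $e(\gamma_2)$, reuse the same pairs $(B_{1,i},B_{2,i})$ from each computation, and reuse the same intermediate paths, with the intermediate path at the junction $t=1/2$ taken to be the constant path at $y$. This last choice is legitimate because $y\notin Y$, and it coincides with the trailing constant path of the $e(\gamma_1)$ computation and the leading constant path of the $e(\gamma_2)$ computation. The sum defining $e(\gamma_3)$ therefore splits exactly into the sum for $e(\gamma_1)$ plus the sum for $e(\gamma_2)$, and by the invariance of $e$ this yields $e(\gamma_3)=e(\gamma_1)+e(\gamma_2)$.

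For $(ii)$, fix a path-homotopy $H:[0,1]^2\to X$ with $H(s,0)=\gamma_1(s)$, $H(s,1)=\gamma_2(s)$, and $H(0,t)=x$, $H(1,t)=y$ for all $t$. Cover $H([0,1]^2)$ by the $B_2$-neighbourhoods produced by Theorem~\ref{localsep} at points of $H([0,1]^2)\cap Y$ together with $X\setminus Y$; by compactness, refine to a grid $0=s_0<\ldots<s_m=1$, $0=t_0<\ldots<t_n=1$ so fine that for every rectangle $R_{ij}=[s_{i-1},s_i]\times[t_{j-1},t_j]$ the image $H(R_{ij})$ is contained in an admissible pair $(B_{1,ij},B_{2,ij})$, where $B_{1,ij}=B_{2,ij}$ is taken to be a connected open neighbourhood disjoint from $Y$ whenever $H(R_{ij})\cap Y=\emptyset$. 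Let $\alpha_j(s)=H(s,t_j)$, so that $\alpha_0=\gamma_1$ and $\alpha_n=\gamma_2$; it suffices to prove $e(\alpha_j)=e(\alpha_{j+1})$ for each $j$. To do this, compute both sides using the subdivision $\{s_i\}$ and the common ball pairs $(B_{1,i,j+1},B_{2,i,j+1})$ coming from the row of rectangles between $t_j$ and $t_{j+1}$; this is legal because $\alpha_j([s_{i-1},s_i])$ and $\alpha_{j+1}([s_{i-1},s_i])$ both lie in $H(R_{i,j+1})\subset B_{2,i,j+1}$. For each interior index $i$, the points $\alpha_j(s_i)$ and $\alpha_{j+1}(s_i)$ lie in the same connected component of $B_{2,i,j+1}\cap B_{2,i+1,j+1}$, since they are joined by the vertical path $t\mapsto H(s_i,t)|_{[t_j,t_{j+1}]}$, whose image is the shared vertical edge of $R_{i,j+1}$ and $R_{i+1,j+1}$ and hence lies in $H(R_{i,j+1})\cap H(R_{i+1,j+1})$. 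Since $Y$ has empty interior, pick a common perturbation $p_i$ in that component with $p_i\notin Y$, and use paths within the component from $\alpha_j(s_i)$ and from $\alpha_{j+1}(s_i)$ to $p_i$ as the intermediate paths in the two computations; for $i=0$ and $i=m$, both intermediate paths are forced to be constant at $x$ and $y$ and trivially agree.

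With identical balls and identical endpoint values $p_i$ across the two computations, the two sums $e(\alpha_j)$ and $e(\alpha_{j+1})$ agree term by term, because each summand $e(s_{i-1},s_i)$ depends only on whether $p_{i-1}$ and $p_i$ lie in the same connected component of $B_{1,i,j+1}\setminus Y$, a condition not involving $j$. Chaining the equalities $e(\alpha_0)=e(\alpha_1)=\ldots=e(\alpha_n)$ yields $e(\gamma_1)=e(\gamma_2)$. The principal obstacle is ensuring that a common intermediate point $p_i$ exists at every interior $i$, which is precisely the connectivity statement about $B_{2,i,j+1}\cap B_{2,i+1,j+1}$ furnished by the shared edge of two adjacent grid rectangles; this is the one geometric input of the argument, and everything else is a direct reorganization of the definitions together with the invariance of $e$ proved in the previous lemma.
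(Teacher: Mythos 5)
Your proof is correct, and while it rests on the same grid-and-perturbation idea as the paper's, the bookkeeping is organized in a genuinely different way. For part $(i)$ the paper leaves the proof to the reader; your unwinding (concatenate the subdivisions, reuse the original balls and intermediate paths, note that the junction path at $y$ can be taken constant in both computations and hence is shared) is the natural argument and is sound. For part $(ii)$ the paper attaches to every grid corner a perturbed point $\gamma_{i,j}(1)$, defines horizontal and vertical increments $e_{i,j}$ and $e'_{i,j}$, proves a per-cell cocycle relation $e_{i,j-1}+e'_{i,j}=e'_{i-1,j}+e_{i,j}$ via lemma~\ref{agree}, and then sums this discrete exactness statement over all cells, using the boundary conditions $e_{i,0}=e_{i,\ell}=0$, to telescope to $e(\gamma_1)=e(\gamma_2)$. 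You instead compare $e(\alpha_j)$ and $e(\alpha_{j+1})$ directly: compute both with the balls from the strip of rectangles between $t_j$ and $t_{j+1}$, and exploit the fact that $\alpha_j(s_i)$ and $\alpha_{j+1}(s_i)$ lie in the same component of $B_{2,i,j+1}\cap B_{2,i+1,j+1}$ (joined by the shared vertical edge's image) to choose a single perturbed point $p_i$ serving both computations. Then the summands $e(s_{i-1},s_i)$ coincide term by term, and you chain the equalities over $j$. This sidesteps the cell relation and the invocation of lemma~\ref{agree} entirely at the cost of arguing once per grid column that a common $p_i$ exists; it is somewhat more economical in its use of machinery, though both routes rely on the same compactness/refinement and the well-definedness of $e$ under changes of choice.

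One small point worth making explicit when you write this up: the well-definedness lemma preceding this one is what licenses you to compute $e(\alpha_j)$ with the row-$(j{+}1)$ balls when comparing with $\alpha_{j+1}$, and then to recompute the same $e(\alpha_j)$ with the row-$j$ balls when comparing with $\alpha_{j-1}$; without that lemma your chain of equalities would be comparing different quantities at each step.
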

\begin{proof}
The proof of $(i)$ is left as an exercise to the reader. We only prove $(ii)$. So let $f:[0,1]\times[0,1]\longrightarrow X$ with $f(s,0)=x$ and $f(s,1)=y$ for all $s\in[0,1]$ and with $\gamma_1(t)=f(0,t)$ and $\gamma_2(t)=f(1,t)$, for all $t\in[0,1]$.
By theorem \ref{localsep} one can cover $Y$ with open sets $B_2$ as in the theorem. The inverse images $f^{-1}(B_2)$ together with $f^{-1}(X\backslash Y)$ form an open cover of $[0,1]\times[0,1]$. We can choose intermediate points $0=a_0<a_1<\ldots<a_k=1$ and $0=b_0<b_1<\ldots<b_{\ell}=1$ such that each $[a_{i-1},a_i]\times[b_{i-1},b_i]$ is contained in such an inverse image. So each $f([a_{i-1},a_i]\times[b_{j-1},b_j])$ is either disjoint with $Y$ or contained in a $B_2$ as in the theorem. In the latter case we call the pair $B_{1,i,j},B_{2,i,j}$. In the former case we just take $B_{1,i,j}=B_{2,i,j}$ some connected open set containing $f([a_{i-1},a_i]\times[b_{j-1},b_j])$ and disjoint with $Y$. For each $i=1,\ldots,k-1$ and $j=1,\ldots,\ell-1$ we choose a path
$$\gamma_{i,j}:[0,1]\longrightarrow B_{2,i,j}\cap B_{2,i+1,j}\cap B_{2,i,j+1}\cap B_{2,i+1,j+1}$$
with $\gamma_{i,j}(0)=f(a_i,b_j)$ and $\gamma_{i,j}(1)\notin Y$.
For $i=0$ (resp.\ $i=k$) same story except the path should go to $B_{2,1,j}\cap B_{2,1,j+1}$ (resp.\ $B_{2,k,j}\cap B_{2,k,j+1}$). For $j=0$, (resp.\ $j=\ell$) we just take $\gamma_{i,j}$ to be the constant path on $x$ (resp.\ $y$). Now for each $i=1,\ldots,k$ and $j=1,\ldots,\ell-1$ we can apply lemma \ref{agree} so that we know that $B_{1,i,j},B_{2,i,j}$ and $B_{1,i,j+1},B_{2,i,j+1}$ agree on whether $\gamma_{i-1,j}(1)$ and $\gamma_{i,j}(1)$ are on the same side of $Y$ or not. This allows us to define for each $i=1,\ldots,k$ and $j=0,\ldots,\ell$ an element $e_{i,j}\in\ZZ/2\ZZ$, expressing whether or not $\gamma_{i-1,j}(1)$ and $\gamma_{i,j}(1)$ are on the same side of $Y$. Likewise, for each $i=0,\ldots,k$ and $j=1,\ldots,\ell$ we can define an element $e'_{i,j}\in\ZZ/2\ZZ$, expressing whether or not $\gamma_{i,j-1}(1)$ and $\gamma_{i,j}(1)$ are on the same side of $Y$. We then have
$$e(\gamma_1)=e'_{0,1}+\ldots+e'_{0,\ell}\text{ and }e(\gamma_2)=e'_{k,1}+\ldots+e'_{k,\ell}.$$
We also have $e_{i,0}=e_{i,k}=0$ for all $i=0,\ldots,k$. To prove the desired equality it is enough to show for all $i=1,\ldots,k$ and $j=1,\ldots,\ell$ that
$$e_{i,j-1}+e'_{i,j}=e'_{i-1,j}+e_{i,j}.$$
But this easily follows from the fact that the four involved points $\gamma_{i-1,j-1}(1)$, $\gamma_{i,j-1}(1)$, $\gamma_{i-1,j}(1)$, $\gamma_{i,j}(1)$, are all contained in $B_{2,i,j}$.
\end{proof}
\begin{proof}[Proof of theorem \ref{twocomponents}]
Let $B_1,B_2$ be balls as in theorem \ref{localsep}. Let $x$ and $y$ be points of $B_2\backslash Y$ that are not in the same connected component of $B_1\backslash Y$. Let $\gamma_1$ be a path in $B_2$ from $x$ to $y$. Then $e(\gamma_1)=1$. By corollary \ref{global} $X\backslash Y$ has at most two connected components. So all we have to prove is that $X\backslash Y$ is not connected. So suppose it is connected, let $\gamma_2$ be a path from $y$ to $x$ that doesn't encounter $Y$. Then $e(\gamma_2)=0$. By lemma \ref{ii} $(i)$ the composed path $\gamma_3$ from $x$ to $x$ satisfies $e(\gamma_3)=1$. Let $G$ be the fundamental group of $X$ based at $x$. By lemma \ref{ii} $(ii)$ we have a map
$$e:G\longmapsto\ZZ/2\ZZ$$
mapping an equivalence class of path-homotopic paths $\gamma$ from $x$ to $x$ to $e(\gamma)$. By lemma \ref{ii} $(i)$ this is a group homomorphism. Now we use the result that first homology group of $X$ is the abelianization of $G$. It follows that this group homomorphism factors through the first homology group (as $\ZZ/2\ZZ$ is Abelian). By our assumption that $\Hom(H_1(X),\ZZ/2\ZZ)=0$ the map is zero. But this contradicts the fact that we constructed a path $\gamma_3$ from $x$ to $x$ with $e(\gamma_3)=1$.
\end{proof}


\begin{thebibliography}{9}
\bibitem{dold} A.\ Dold, \emph{Lectures on Algebraic Topology}, springer, (1995)
\end{thebibliography}
\end{document}